\tikzset{
	negated/.style={
		decoration={
			markings,
			mark=at position 0.5 with {\node[transform shape] (tempnode) {{\small /}};},
		},
		postaction={decorate},
	},
}
\newtheorem{theorem}{Theorem}
\newtheorem{definition}[theorem]{Definition}
\newtheorem{lemma}[theorem]{Lemma}
\newtheorem{proposition}[theorem]{Proposition}
\newtheorem{corollary}[theorem]{Corollary}
  \newtheorem{example}[theorem]{Example}
  \newtheorem{remark}[theorem]{Remark}
\newenvironment{proof}{    % De proof o de prueba seg\'{u}n convenga
  \noindent
  \textbf{Proof.}}{
  \hfill $\Box$
  \vspace{3mm}
}
\numberwithin{equation}{section}
\newcommand{\N}{\mathbb{N}} %% Conjunto naturales:     \N
\newcommand{\Z}{\mathbb{Z}} %% Conjunto enteros:       \Z
\newcommand{\R}{\mathbb{R}} %% Conjunto reales:        \R
\newcommand{\C}{\mathbb{C}} %% Conjunto complejos:     \C
\newcommand{\D}{\mathbb{D}} %% Disco unidad:           \D
\newcommand{\W}{C_{w,\varphi}} %% weighted composition operator:
\title{Dynamics of weighted composition operators on spaces of continuous functions}
\author{M.J. Beltr\'an-Meneu, E. Jord\'a and M. Murillo-Arcila}
\begin{document}
\maketitle

\begin{abstract}

Our study is focused on the dynamics of weighted composition operators  defined on a locally convex space $E\hookrightarrow (C(X),\tau_p)$  with $X$ being a topological  Hausdorff space containing at least two different points and such that the evaluations $\{\delta_x:\ x\in X\}$ are linearly independent in $E'$.  We prove, when $X$ is compact and $E$ is a Banach space  containing a nowhere vanishing function, that a weighted composition operator $\W$ is never weakly supercyclic on $E$. 
%Moreover, we provide sufficient conditions for the symbol $\varphi$ that ensure no  $\tau_p$-supercyclicity.
%such as the existence of a non-constant convergent orbit,  of a periodic (not fixed) point, of two fixed points or stable orbits around a fixed point.
%For the special case $X=\overline{\D}$ we prove that an isometric weighted composition operator can never be $\tau_p$-supercyclic.
We also prove that if the symbol $\varphi$ lies  in the unit ball of $A(\D)$, then every weighted composition operator can never be $\tau_p$-supercyclic neither on  $C(\D)$ nor on the disc algebra $A(\D)$.
% For the  case $X=\partial\D$ we also obtain conditions which ensure $\W$ is not $\tau_p$-supercyclic.
Finally,  we obtain  Ansari-Bourdon type  results and conditions on the spectrum for arbitrary weakly supercyclic operators, and we provide necessary conditions for a composition operator to be weakly supercyclic on the space of holomorphic functions defined in non necessarily simply connected planar domains. As a consequence, we show that no composition operator can be weakly supercyclic neither on the space of holomorphic functions on the punctured disc nor in the punctured plane.\\
\textbf{Keywords:} weighted composition operator, weak supercyclicity, disc algebra,  space of holomorphic functions\\
\textbf{Mathematics Subject Classification (2010):} 47A16, 47B33, 46E15
\end{abstract}

\section{Introduction}
Troughout this paper, let $X$ denote  a topological Hausdorff space containing at least two different points and let $C(X)$ be the space of continuous functions on $X.$   On this space, consider the weak topology, the pointwise convergence topology $\tau_p,$ and whenever $X$ is compact, the supremum norm topology.
Let $E$  be a locally convex space continuously included in  $(C(X), \tau_p)$ and such that the set $\{\delta_x:  x\in X\}$ is linearly independent in $E',$ where $\delta_{x}$ is the functional on $E$ of point evaluation at $x.$
The aim of this paper is to investigate weak forms of supercyclicity of the \emph{weighted composition operator} $\W$ when it is well defined in $E$. We refer to the next section for the precise notation and definitions.

%Let us briefly recall these concepts.
%Our main results will stand for Banach spaces $E$ endowed with the weak or the pointwise convergence topology.

Given an operator $T$ defined on a topological vector space $(F, \tau)$,  for every $n\in \N$ the operator $T^n: F\rightarrow F$ is defined as the $n$-th iterate of $T,$ i.e., $T^n:=T\circ\overset{n)}{\cdots}\circ T,$ and $T^0=I.$  A point $f\in F$ is said to be \emph{periodic} if there exists $n\in \N$ such that $T^nf=f,$ and it is a \emph{fixed point} if $Tf=f.$  We say that an operator $T$  is  $\tau$-\textit{hypercyclic} if there is some vector $f\in F$ (called $\tau$-hypercyclic vector) whose orbit $\text{Orb}(T, f):=\{T^nf: \ n=0,1,\dots \}$ is dense in $(F, \tau).$  If $\text{Orb}(T, \text{span}\{f\})=\{\lambda T^nf: \lambda \in \C,  n=0,1,\dots \}$ is dense, we say that $T$ is $\tau$-\textit{supercyclic}, and if $\text{span} \{\text{Orb}(T, f)\}=\text{span}\{T^nf: \ n=0,1,\dots \}$ is dense,  it is said to be $\tau$-\textit{cyclic}. In the case $\tau$ denotes the weak topology, the operator is said to be weakly hypercyclic (resp. weakly supercyclic or weakly cyclic), and if $\tau=\tau_p,$ the operator is said to be pointwise hypercyclic (resp. pointwise supercyclic or pointwise cyclic).
%On the other hand, if $\tau$ denotes the pointwise topology $\tau_p$ we will say that $T$ is $\tau_p$-hypercylic  (resp. $\tau_p$-supercyclic or $\tau_p$-cyclic).
If $F$ is a separable infinite dimensional Banach or Fr\'echet space and $\tau$ denotes the strong topology, the operator is said to be hypercyclic (resp. supercyclic or cyclic). It is clear that for any of the aforementioned topologies, $\tau$-hypercyclicity implies $\tau$-supercyclicity, which in turn implies $\tau$-cyclicity.  In the case of Banach spaces, if we mix the norm and weak topologies 	  we first point out that weak cyclicity is equivalent to cyclicity in the norm topology because the weak closure of the convex set span$\{\text{Orb}(T, f)\}$ coincides with the norm closure. However, weakly hypercyclic operators are not always norm hypercyclic as it was shown by Chan and Sanders in \cite[Corollary 3.3]{ChanSanders} and  weakly supercyclic operators are not necessarily norm supercyclic \cite[Theorem 2.3]{Sanders 2004}.
In the setting of Banach spaces,  norm and weakly supercyclic operators share many  properties such  as the density of supercyclic vectors. It is well known that if $T$ is norm supercyclic then the set of all norm supercyclic vectors for $T$ is norm dense in $F$ and Sanders \cite{Sanders 2004} proved that if $T$ is weakly supercyclic,  the set of all weakly supercyclic vectors for $T$ is also norm dense in $F.$

For a good exposition of the subject of linear dynamics we refer the reader to the monographs by Bayart and Matheron \cite{BayartMatheron} and by Grosse-Erdmann and Peris \cite{GE_Peris}, and concerning composition operators we refer to the books by Cowen and MacLuer \cite{Cowen_MacCluer_book} and by Shapiro \cite{Shapiro1993}.

 %{{\color{red} Kalmes and Niess \cite{KalmesNiess2010} obtained results  on the kernels of non-constant linear partial differential equations (for instance, for harmonic functions). }

 %A special case of weighted composition operators whose dynamics have also been considered  in many papers is surjective  isometries \cite{stone}.

 Ansari and Bourdon \cite{AnsariBourdon} showed that an isometry on an infinite dimensional
Banach space cannot be norm supercyclic. However, surjective  isometries  can be weakly supercyclic. Sanders proved in \cite[Theorem 2]{Sanders 2005} that the bilateral backward shift is weakly supercyclic on $c_0(\Z)$. Shkarin improved this theorem showing that in fact the bilateral  backward shift operator is weakly supercyclic on $\ell_p(\Z), \ p > 2$  \cite[Theorem 1.5]{Shkarin}. For $1\leq p\leq  2,$  Montes-Rodr\'{\i}guez and Shkarin  \cite[Theorem 6.3]{MontesShkarin} and Shkarin \cite[Theorem 1.5]{Shkarin} showed   that for weighted bilateral shifts, weak supercyclicity is equivalent to supercyclicity on $\ell_p(\Z)$. From this together with Ansari-Bourdon's theorem, it immediately follows that an isometric weighted bilateral shift cannot be weakly supercyclic  on $\ell_p(\Z)$ for $1\leq p\leq  2 $. See also \cite[page 253]{BayartMatheron}. Besides other results about weak forms of supercyclicity, Bayart and Matheron exhibit in \cite{BM} an example of a unitary operator on a Hilbert space which is weakly supercyclic.

%Sufficient conditions under which an operator is not weakly supercyclic are given by Montes-Rodr??guez and Shkarin in [23]. Furthermore, Shkarin has studied non-sequential weak supercyclicity and hypercyclicity of operators on a Banach space in [29].

From this perspective, we study supercyclity on spaces of functions endowed with the weak topology and also with the pointwise convergence topology, which is in general the weakest natural locally convex Hausdorff topology. We discuss here the difference between both concepts showing that they are in general different by giving an example of a $\tau_p$-supercyclic operator which is not cyclic, hence, not (weakly) supercyclic.  For surjective isometries we get some extensions of Ansari-Bourdon's theorem for important spaces of functions. For $C(X)$ when $X$ is compact, we get that a surjective isometry (which is always a weighted composition operator by the Banach-Stone theorem) is never weakly supercyclic, and for the disc algebra $A(\D)$ we go further. No weighted composition operator on $A(\D)$ is pointwise supercyclic.

These results on weighted composition operators on $A(\D)$ connect with recent research about weak and strong supercyclicity  of  weighted composition operators on spaces of holomorphic functions. Yousefi and Rezaei \cite{YousefiRezaei} and Kamali et al. \cite{Kamalietal2010} investigated the hypercyclicity and supercyclicity of $\W$ on the space of holomorphic functions on the disc $H(\D)$, both with respect to the compact-open topology and to its corresponding weak topology. In \cite{Bes2014} B\`es proved, among other results, that weak supercyclicity and the topologically mixing property (a dynamical concept more restrictive than hypercyclicity) are equivalent notions for $\W$ on the space of holomorphic functions on a simply connected plane domain, which in turn are satisfied if and only if the weight $w$ is zero-free and the  symbol $\varphi$ is univalent and without fixed points. More recently, in \cite{Moradietal2017} Moradi et al. proved that a class of weighted composition operators which contains every composition operator on  some Banach spaces of analytic functions such as the disc algebra and  the analytic Lipschitz space does not contain weakly supercyclic operators.  For supercyclicity and weak supercyclicity of operators defined on spaces of functions with real variable, see \cite{BMS,FGJ,MRS} and the references therein.

There is also active research focused on the connections between spectral theory of linear operators   defined on Banach or locally convex spaces, the linear dynamics of the operator $T$ and the dynamics of its adjoint $T'$.  Specially relevant are the connections between the linear dynamics of $T$  and the point spectrum $\sigma_p(T')$  of its adjoint. Herrero proved in \cite{Herrero} that the point spectrum of a supercyclic operator defined on a Hilbert space consists at most of one point, and whenever it is not empty, the dimension of the subspace formed by the eigenvectors is one. Peris extended this result  in \cite{Peris2001} to supercyclic operators defined on locally convex spaces.  Ansari proved in \cite{Ansari1995} that if $T$ is cyclic and  the interior of $\sigma_p(T^*)$ is empty, then $T$ has a norm dense collection of cyclic vectors. The Ansari-Bourdon theorem mentioned above about the non supercyclicity of isometries on Banach spaces is a consequence of a theorem which asserts that for a supercyclic power bounded operator $T$  defined on a Banach space, the orbits of the adjoint $T^*$  are everywhere $\omega^*$ convergent to 0.  For recent research extending these classic results we refer to \cite{AlbaneseJornet2018,AlemanSuciu,Dugal}. In the last section of the paper we use our results about weighted composition operators  to get results for arbitrary  operators defined on locally convex spaces, obtaining  conditions  in the dynamics of the adjoint $T'$ which are necessary for $T$  being weakly supercyclic. In case of  operators defined on Banach spaces, the necessary conditions are related to the  point spectrum of the operator.

	In \cite{BonetPeris}, Bonet and Peris proved that every separable infinite dimensional Fr\'echet space admits a hypercyclic surjective operator. Grosse-Erdmann and Mortini showed in \cite[Theorem 3.21]{GE_Mortini}  that if $U\subseteq \C$ is a non simply connected domain such that $\widehat{\C}\setminus U$  has finitely many  bounded components then $H(U)$ does not support any hypercyclic composition operator. As an application of our results we provide an example of a space of holomorphic functions endowed with the compact open topology which admits no weakly supercyclic composition operators. Concretely, we show that every composition operator on $H(\D\setminus\{0\})$ or on $H(\C\setminus\{0\})$ is never weakly supercyclic. This result is strongly connected with the open question proposed in  \cite[Problem 3]{Bes2014}. 
%	We know that there are domains which are no simply connected and support composition operators. For instance, Shapiro proved in \cite[Theorem 3.1]{Shapiro1993} that if  $G$ is a complex plane with the integers removed, then the translation by one  is chaotic on $H(G)$.

\subsection{Notation  and Outline of the Paper}

Our notation  is standard. We stand $E'$ for the dual of a locally convex space unless $E$ is a Banach space, in which case its dual is denoted   by $E^*.$  The adjoint of a continuous linear mapping $T\in L(E)$ is denoted by $T'$ in the general case and by $T^*$ when $E$ is Banach.  As mentioned in the introduction, our study is focused on dynamics of weighted composition operators  defined on an infinite dimensional locally convex space $E\hookrightarrow (C(X),\tau_p)$ with $X$ being a topological  Hausdorff space  such that the evaluations $\{\delta_x: x\in X\}$ are linearly independent in $E'$. We consider a continuous function $w:X\to \C$  (the multiplier) and a continuous $\varphi: X\to X$  (the symbol) such that the weighted composition operator $\W: E\to E$, $f\mapsto w (f\circ \varphi)$ is well defined and continuous. The operator $\W$ combines the classical composition operator
$C_{\varphi}:E\rightarrow E, \  f \mapsto f \circ \varphi$ with the pointwise multiplication operator $M_{w}:E\rightarrow E,\  f \mapsto w\cdot f.$
%We study when the weighted composition operator $\W$  is supercyclic in $E$.
%Our main relevant results are negative, for $E$ being a Banach space endowed with the weak or the pointwise topology. We describe them below.
%defined by $\|f\|_{\infty}=\sup_{x\in X}|f(x)|,$ $f\in  C(X).$

  In \cite{Moradietal2017}, Moradi et al. provide sufficient conditions under which
a weighted composition operator on a Banach space of analytic functions
is not weakly supercyclic, and they proved that for some Banach spaces  $Y$ of analytic functions on $\D,$ the unweighted composition operator $C_{\varphi}$  is not weakly supercyclic. These spaces satisfy that every element in $Y$ has a continuous extension to the closed unit disc $\overline{\D}$ and for every $z$ in the boundary $\partial{\D},$ $\delta_z$ is bounded.  The disc algebra

$$A(\D)=\{f\in H^{\infty}(\D): \ f \text{ continuous on } \overline{\D}\},$$
and the analytic Lipschitz spaces  $ Lip_{\alpha}(\D),$ $0<\alpha\leq1,$
$$ Lip_{\alpha}(\D)=\{f\  \mbox{analytic}\hspace{0.1cm}\mbox{in}\hspace{0.1cm} \D: |f(z)-f(w))|=O(|z-w|^{\alpha}) \hspace{0.1cm}\mbox{for}\hspace{0.1cm}\mbox{ all}\hspace{0.1cm} z,w \in \overline{\D}\},$$
%which are Banach spaces under the norms $\|f\|_{\infty}=\sup_{z\in \overline{\D}|f(z)$| and
%$$\|f\|_{\alpha}=|f(0)|+\sup\left\{\frac{|f(z)-f(w)}{|z-w|^{\alpha}}:\ z\neq w\in \partial{\D}\right\},$$
%respectively,
are examples of such spaces. As a corollary of Proposition 4 in \cite{Moradietal2017} one immediately obtains the following:

\begin{corollary}\label{unbounded}
	Let $\varphi, w \in Y$ such that $\varphi(\D)\subseteq \D$ and let $a \in \overline{\D}$ be a fixed point of $\varphi$ such that $w(a)\neq 0.$ If $\W$ is weakly supercyclic on $Y,$ then the set $$\left\{\frac{\prod_{m=0}^{n}w(\varphi^m(z))}{w^{n}(a)} , \ n\in \N \right\}$$
	is unbounded for every $z\in \overline{\D}\setminus \{a\}.$
\end{corollary}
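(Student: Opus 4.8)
The plan is to combine the explicit form of the iterates of a weighted composition operator with the elementary principle that a weakly supercyclic operator, when paired with two linearly independent continuous functionals, yields a union of complex lines that is dense in $\C^2$. First I would record the iteration formula $\W^n f = \left(\prod_{k=0}^{n-1}(w\circ\varphi^k)\right)(f\circ\varphi^n)$ for $n\in\N$ (with $\varphi^0=\mathrm{id}$), which follows by a one-line induction from $\W f = w\,(f\circ\varphi)$. Since $\varphi(a)=a$, evaluation at $a$ collapses to $\W^n f(a)=w(a)^n f(a)$, whereas evaluation at $z$ gives $\W^n f(z)=\left(\prod_{k=0}^{n-1}w(\varphi^k(z))\right)f(\varphi^n(z))$. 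Here I would also note that every $f\in Y$ extends continuously to the compact set $\overline{\D}$ and that $\varphi(\overline{\D})\subseteq\overline{\D}$ (because $\varphi(\D)\subseteq\D$ and $\varphi$ is continuous on $\overline{\D}$), so $\varphi^n(z)\in\overline{\D}$ and $\sup_n|f(\varphi^n(z))|\le\sup_{\overline{\D}}|f|<\infty$.

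Next, let $f$ be a weakly supercyclic vector for $\W$ and consider $F\colon Y\to\C^2$, $F(g)=(g(a),g(z))$. The evaluations $\delta_a,\delta_z$ are continuous on $Y$ (automatically at interior points, and by the defining property of $Y$ at boundary points) and, since $a\neq z$ and $Y$ separates points, linearly independent; hence $F$ is surjective and continuous from $(Y,\text{weak})$ to $\C^2$, so that $F(\text{Orb}(\W,\text{span}\{f\}))=\{\lambda(\W^nf(a),\W^nf(z)):\lambda\in\C,\ n\ge 0\}$ is dense in $\C^2$. Density forces $f(a)\neq 0$ (otherwise the set lies inside $\{0\}\times\C$), so together with $w(a)\neq0$ we get $\W^nf(a)\neq0$ for every $n$; and a routine argument shows that a union of lines through the origin in the directions $(\W^nf(a),\W^nf(z))$ can be dense in $\C^2$ only when the ratios $\W^nf(z)/\W^nf(a)$ form an unbounded set.

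Finally I would unwind the ratio: from the formulas above, for $n\in\N$,
$$\frac{\W^{n+1}f(z)}{\W^{n+1}f(a)}=\frac{1}{w(a)}\cdot\frac{\prod_{m=0}^{n}w(\varphi^m(z))}{w(a)^{n}}\cdot\frac{f(\varphi^{n+1}(z))}{f(a)},$$
where the left-hand side ranges over a tail of the unbounded set $\{\W^mf(z)/\W^mf(a):m\ge0\}$, the factor $f(\varphi^{n+1}(z))/f(a)$ is bounded in $n$ by $\sup_{\overline{\D}}|f|/|f(a)|$, and $1/w(a)$ is a fixed nonzero constant. Consequently $\left\{\prod_{m=0}^{n}w(\varphi^m(z))/w(a)^{n}:n\in\N\right\}$ must be unbounded, as claimed. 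The main point to get right — and the only place where the hypotheses on $Y$ really enter — is the nuisance factor $f(\varphi^{n+1}(z))$: it is harmless precisely because $f\in Y$ extends continuously to $\overline{\D}$ and is therefore bounded there, so it cannot absorb the growth of the weight products. (Alternatively, the statement follows by directly quoting Proposition 4 of \cite{Moradietal2017}.)
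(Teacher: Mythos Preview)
Your argument is correct and is precisely the approach the paper has in mind: the paper does not give an independent proof of Corollary~\ref{unbounded} but simply records it as an immediate consequence of Proposition~4 in \cite{Moradietal2017}, and what you have written is exactly the content of that proposition (project the projective orbit onto $\C^2$ via the pair $(\delta_a,\delta_z)$, use density to force the ratios $\W^n f(z)/\W^n f(a)$ to be unbounded, and then peel off the bounded factor $f(\varphi^{n+1}(z))/f(a)$ using continuity on $\overline{\D}$). The only point you leave slightly implicit is that $\delta_a$ and $\delta_z$ are linearly independent on $Y$; this is part of the standing hypotheses on such spaces $Y$ in \cite{Moradietal2017} (and is the analogue of the paper's blanket assumption that $\{\delta_x:x\in X\}$ is linearly independent in $E'$), so you may simply invoke it rather than arguing that ``$Y$ separates points''.
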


In Section \ref{sec1} we strengthen the necessary condition provided in Corollary \ref{unbounded} and we extend it to general locally convex spaces (see Proposition \ref{necessary_wf}) instead of the concrete space of analytic functions $Y$. In Theorem \ref{CXcompact} we use this result to prove the following, from which it immediately follows that a weighted composition operator can never be weakly supercyclic on $A(\D)$,  neither on a Banach space dense and continuoulsy embedded in it:\\
\newline
\textbf{Theorem A} Let $X$ be compact and $E$ a Banach space containing a nowhere vanishing function and satisfying $E\hookrightarrow (C(X), \|\ \|_{\infty}).$  A weighted composition operator $\W: E \rightarrow E$  is never weakly supercyclic.\\

%In the case $X$ is not necessarily compact, we provide in Proposition \ref{necessaryCK} necessary conditions for the weak supercyclicity (even for pointwise supercyclicity) of $\W$ on $E.$ Analogously to the proof of \cite[Proposition 2.1]{Bes2014}, we get that if a weighted composition operator  is weakly supercyclic, then the multiplier $w$ must be zero-free and $\varphi$ must be univalent.

%\textbf{Proposition}\label{taup_nocyclic}
%	Consider the weight $w=(w_n)_n,$ $w_n>0,$  $\lim_nw_n\rightarrow 0.$
%	\begin{itemize}
%		\item[(i)] If $f\in \ell_{\infty}\setminus c_{\infty},$ the weighted backward shift $B_w:\text{span}\{f\}\oplus c_{\infty}\rightarrow \text{span}\{f\}\oplus c_{\infty}$ is $\tau_p$-supercyclic but no (weakly super)cyclic.
%		\item[(ii)]  If $f$ is a supercyclic vector of $B_w: c_0\rightarrow c_0$ and there exists $g\in c_{\infty}$ such that $B_wg=f,$ then $B_w:c_{\infty}\rightarrow c_{\infty}$ is cyclic, $\tau_p$-supercyclic but no weakly supercyclic.
%	\end{itemize}

We then restrict our study to the case where  $E$ is a subspace of $C(X)$ endowed with the $\tau_p$ topology. More concretely, in Theorem \ref{noweakcases} we provide sufficient conditions for the symbol $\varphi$ which ensure no $\tau_p$-supercyclicity of $\W$ such as the existence of a non-constant convergent orbit, the existence of stable orbits around a fixed point,  or, whenever  $X$ is compact, the existence of periodic (not fixed) points.  B\`es  \cite{Bes2014} gives a complete characterization of weak supercyclicity of $\W$ on $H(\D).$  In particular, he shows that  for any multiplier $w$ and any symbol $\varphi: \D\to\D$ with a fixed point the weighted composition operator $\W$ cannot be weakly supercyclic. From our Theorem \ref{noweakcases}  we show that assuming these conditions $\W$ is  not $\tau_p$-supercyclic on $C(\D)$, which easily implies B\`es' result since $H(\D)$ is densely embedded in $(C(\D),\tau_p)$.

 %showing that no weighted composition operator $\W$ is  $\tau$-supercyclic on the disc algebra $A(\D)$ and on the analytic Lipschitz spaces $ Lip_{\alpha}(\D)$ for $0<\alpha\leq1,$ for $\tau$ a topology finer than the pointwise convergence topology at  two different points of $\D.$

In \cite{Moradietal2017}, Moradi et al. provide sufficient conditions under which
a weighted composition operator on certain Banach spaces of analytic functions
is not weakly supercyclic. For these spaces we prove that no further restriction in the operator than being well defined is needed to obtain that it can never be even pointwise supercyclic.  Concretely, in Theorem \ref{disk} we prove:\\

\textbf{Theorem B}
Let $E\hookrightarrow (C(\overline{\D}), \tau_p)$ be such that the evaluations on $\overline{\D}$ are linearly independent in $E'.$ If for some $\varphi \in A(\D)$  and $w\in C(\overline{\D})$  the weighted composition operator $\W: E\rightarrow E$ is well defined, then it is not $\tau_p$-supercyclic. As a consequence, any weighted composition operator $\W$ is never $\tau_p$-supercyclic on the disc algebra $A(\D)$ nor on the analytic Lipschitz spaces $ Lip_{\alpha}(\D),$ $0<\alpha\leq1.$
\\

%In the case of $ C(\partial\D)$ we get in Propositions  \ref{discoperiodicos} and \ref{notauprotacio} that $\W$ cannot be $\tau_p$-supercyclic if the symbol $\varphi$ has periodic points or if it is a rotation and $|w|=1$. As a consequence, in Theorem \ref{Tnotaup}  we conclude that:\\
%
%\textbf{Theorem D}
%If $\varphi: \partial\D\rightarrow \partial\D$  does not have  a wandering interval $J$ and $|w|=1,$ then the weighted composition operator $\W: C(\partial\D)\rightarrow C(\partial\D)$ is not $\tau_p$-supercyclic.\\

In Section \ref{sec4}, we use our results on composition operators  to obtain necessary conditions for an arbitrary operator $T$ defined on a locally convex space to be weakly supercyclic.  These conditions are connected with the classical Ansari-Bourdon's theorem. More concretely, in Theorem \ref{lcs} and Corollary \ref{finalcorollary} we get:\\
\label{lcs}

\textbf{Theorem C} \begin{itemize} \item[(i)] Let $E$ be a locally convex space and $T:E\to E$  a continuous linear operator  which is weakly supercyclic and satisfies $q\circ T\leq q$  for a continuous norm $q$ on $E$. Then $\sigma_p(T)\cap \partial\D=\emptyset$ and  $\sigma_p(T')\cap \partial \D=\emptyset$. In particular, neither $T$ nor $T'$ have non zero fixed points.
\item[(ii)] If $X$ is a Banach space and $T:X\to X$ is a weakly supercyclic operator, then both the point spectrum of $T$ and that of $T^*$ are contained in the open ball $B(0,\|T\|)$.

\end{itemize}

As an application of our results we succeed in showing that every composition operator on $H(U),$ for $U$ being the punctured disc or the punctured plane, is never weakly supercyclic, which is strongly related to \cite[Problem 3]{Bes2014}:\\

\textbf{Theorem D} 
	The spaces $H(\D\setminus\{0\}))$ and $H(\C\setminus\{0\})$ admit no weakly supercyclic composition operators.\\

Finally, in the last section of our paper and motivated by the fact that in Proposition \ref{necessary_wf} only the pointwise convergence topology  is needed,  we study the connections between the concepts of weak supercyclicity and  $\tau_p$-supercyclicity on weighted composition operators.  We study the relation between these two concepts and cyclicity and we provide different examples that permit separate them, i.e. there are pointwise and cyclic supercyclic operators which are not weakly supercyclic (Example \ref{weakandpointwisedifferent}), pointwise supercyclic operators which are not cyclic  (Proposition \ref{taup_nocyclic}) and cyclic operators which are not pointwise supercyclic (Example \ref{multiplicationex}).
In the last part of this section we pay special attention to the study  of weighted composition operators defined on  $C(\overline{\D})$  and  $C(\partial\D),$ where $\partial\D$ stands for the unit circle.

From our results, we conjecture  that if  $X$ is compact,  $E\hookrightarrow (C(X),\tau_p)$ is a Banach space  and the operator $\W: E \rightarrow E$  is isometric, then $\W$  is not $\tau_p$-supercyclic. Of course, the conjecture is not true when $X$ is only assumed to be locally compact, since \cite{Sanders 2004} shows  weak supercyclicity of the backward shift on $c_0(\Z)$, which is an isometric composition operator.\\

\section{Dynamics of $\W$ on spaces of continuous functions}\label{sec1}

In this section we study weak supercyclicity of the weighted composition operator defined on an infinite dimensional locally convex space of continuous functions $E\hookrightarrow (C(X),\tau_p)$ such that all the evaluations $\{\delta_x:\ x\in X\}$ are linearly independent and we obtain important results in the setting of Banach spaces. Our first result provides necessary conditions for  $\W$ to be weakly (even pointwise) supercyclic. The proof is analogous to the one in \cite[Proposition 2.1]{Bes2014}. We include it here for the sake of completeness.

\begin{proposition}\label{necessaryCK}
If $\W:E\rightarrow E$ is $\tau_p$-supercyclic, then:
	\begin{itemize}
		\item[(i)] $w$ is zero-free,
		\item[(ii)] $\varphi$ is univalent.
	\end{itemize}
\end{proposition}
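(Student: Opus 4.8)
The plan is to push the projective orbit forward through a pair of point evaluations and show it gets trapped in a union of at most two lines through the origin, which cannot be dense in $\C^2$. First I would fix two distinct points $x_0,x_1\in X$ (available since $X$ has at least two points) and consider the map $\pi:=(\delta_{x_0},\delta_{x_1}):E\to\C^2$. This map is $\tau_p$-continuous, since $\tau_p$ is the topology of pointwise convergence on $E$, and it is surjective, because $\delta_{x_0},\delta_{x_1}$ are linearly independent in $E'$; this is the only place where the standing hypothesis on the evaluations enters. Consequently, if $f$ is a $\tau_p$-supercyclic vector for $\W$, then $\pi\bigl(\{\lambda\W^nf:\ \lambda\in\C,\ n\ge 0\}\bigr)$ must be dense in $\pi(E)=\C^2$. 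I would also record, by an immediate induction on $n$, the iterate formula $\W^nf=\bigl(\prod_{j=0}^{n-1}w\circ\varphi^j\bigr)(f\circ\varphi^n)$.

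For part (i), suppose $w(x_0)=0$. Then $\delta_{x_0}\circ\W=0$, so $(\W^nf)(x_0)=0$ for every $n\ge 1$ and every $f\in E$. Hence $\pi$ sends the projective orbit of any $f$ into $\bigl(\C\cdot(f(x_0),f(x_1))\bigr)\cup\bigl(\{0\}\times\C\bigr)$, a union of at most two lines through the origin, which is a proper closed subset of $\C^2$ and therefore not dense — contradicting $\tau_p$-supercyclicity. Thus $w$ is zero-free.

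For part (ii), suppose $\varphi(x_0)=\varphi(x_1)$ for some $x_0\neq x_1$. Then $\varphi^j(x_0)=\varphi^j(x_1)$ for all $j\ge 1$, so the iterate formula gives, for every $n\ge 1$, the identity $w(x_1)\,(\W^nf)(x_0)=w(x_0)\,(\W^nf)(x_1)$ (both sides equal $w(x_0)w(x_1)\prod_{j=1}^{n-1}w(\varphi^j(x_0))\cdot f(\varphi^n(x_0))$). Since $w$ is zero-free by part (i), $w(x_0)\neq 0$, so $\pi(\W^nf)$ lies on the fixed line $L=\{(u,v)\in\C^2:\ v=(w(x_1)/w(x_0))\,u\}$ for all $n\ge 1$. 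Therefore $\pi$ again maps the projective orbit of any $f$ into $L\cup\bigl(\C\cdot(f(x_0),f(x_1))\bigr)$, a union of at most two lines through the origin, which is not dense in $\C^2$ — again a contradiction. Hence $\varphi$ is univalent.

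I do not expect a genuine obstacle: the argument is essentially the one in \cite[Proposition 2.1]{Bes2014}. The only points needing mild care are invoking linear independence of the evaluations to get surjectivity of $\pi$ (so that density in $\C^2$, rather than in some proper subspace, is forced), and noting that the extra $n=0$ term of the projective orbit contributes at most one more line and so does not rescue density.
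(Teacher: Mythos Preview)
Your proof is correct and follows essentially the same approach as the paper's, which in turn is modeled on \cite[Proposition~2.1]{Bes2014}. The only cosmetic difference is that you package both parts uniformly via the single projection $\pi=(\delta_{x_0},\delta_{x_1}):E\to\C^2$ and argue that the projective orbit lands in a union of at most two lines, whereas the paper handles (i) directly in $E$ (the orbit sits in $\operatorname{span}\{f\}\cup\operatorname{Ker}(\delta_{z_0})$) and handles (ii) by picking a target $g$ with $g(z_1)\neq\tfrac{w(z_1)}{w(z_2)}g(z_2)$ and running an explicit $\varepsilon$-estimate; the underlying idea is identical.
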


\begin{proof}
	(i) If $w(z_0)=0$ for some $z_0\in X$, then for any $f\in E$ we have $Orb(\W,\text{span}\{f\})\subseteq \text{span}\{f\}\cup Ker(\delta_{z_0})  \subsetneq  E,$ and hence is  not  dense in $E$ with respect to $\tau_p,$ a contradiction.\\
	(ii) Suppose that $\varphi(z_1)=\varphi(z_2)$ for some $z_1,z_2\in X.$ By $(i)$ 	and the fact that  $\delta_{z_1}$ and $\delta_{z_2}$ are linearly independent, there exists $g\in E$ such that $g(z_1)\neq \frac{w(z_1)}{w(z_2)}g(z_2).$ Consider $f\in E$ a $\tau_p$-supercyclic function of $\W.$ For
	$$\epsilon:=\left|g(z_1)- \frac{w(z_1)}{w(z_2)}g(z_2)\right|>0,$$
	let $\lambda\in\C$, $n\in\N$ such that
	$$|\lambda C_{w,\varphi}^nf-g|(z)<\frac{\epsilon}{4\max\{\frac{|w(z_1)|}{|w(z_2)|},1\}},\ (z=z_1,z_2).$$
	Observe that
	$$C_{w,\varphi}^nf(z_1)=C_{\varphi}^nf(z_1)\frac{w(z_1)}{w(z_2)}\prod_{j=0}^{n-1}C_{\varphi}^j(w)(z_2)=\frac{w(z_1)}{w(z_2)}C_{w,\varphi}^nf(z_2),$$
	then it follows that
	$$\epsilon=\left|g(z_1)- \frac{w(z_1)}{w(z_2)}g(z_2)\right|\leq \left|g(z_1)-\lambda C_{w,\varphi}^nf(z_1) \right|+\left| \lambda C_{w,\varphi}^nf(z_1)-\frac{w(z_1)}{w(z_2)}g(z_2)\right|\leq $$
	$$\frac{\epsilon}{4}+\left| \frac{w(z_1)}{w(z_2)}\right|\left|\lambda C_{w,\varphi}^nf(z_2)-g(z_2)\right|\leq \frac{\epsilon}{4}+ \frac{\epsilon}{4},$$
	a contradiction. 	Then $\varphi$ must be univalent.
\end{proof}

As a consequence of Proposition \ref{necessaryCK},  in what follows we only consider weighted composition operators $\W$ such that $w$ is zero-free on $X$ and $\varphi$ is univalent.

\begin{remark}\label{wfno0}
	If $\W$ is $\tau_p$-supercyclic on $E,$  then for every  $z\in X$ and every $\tau_p$-supercyclic function $f$ there exists $n\in \N$ such that $f(\varphi^n(z))\neq 0. $ Otherwise, $\text{Orb}(\W,\text{span}\{f\circ\varphi \})\subseteq  Ker(\delta_{z})\subsetneq  E,$ and hence  $\text{Orb}(\W,\text{span}\{f\circ\varphi \})$ is $\tau_p$ nowhere dense. This is a contradiction since $f\circ\varphi$ is $\tau_p$-supercyclic if $f$ is so.
\end{remark}

The next proposition strengthens the necessary condition for weak supercyclicity provided in Corollary \ref{unbounded} and extends  it to the space of continuous functions $E,$ not necessarily analytic. We remark that we use the tool employed in \cite[Proposition 1.26]{BayartMatheron} and \cite[Lemma 1]{Peris2001} to  prove that the adjoint of a supercyclic operator in a locally convex space $E$ cannot have two linearly independent eigenvectors.

\begin{proposition}\label{necessary_wf}
If $\W: E\rightarrow E$  is $\tau_p$-supercyclic, then
$$\overline{\left\{  \frac{\prod_{m=0}^{n-1}w(\varphi^m(z_1))f(\varphi^n(z_1))}{\prod_{m=0}^{n-1}w(\varphi^m(z_2))f(\varphi^n(z_2))}  , \ n\in \N :  f(\varphi^n(z_2))\neq 0  \right\}}=\mathbb{\C}\quad $$
for every $\tau_p$-supercyclic function $f$ and every $z_1\neq z_2\in X.$  \\
If in addition, $(\varphi^n(z_1))_n$ and $(\varphi^n(z_2))_n$ are convergent in $X$ to some (fixed) points $a$ and $b,$ respectively,  then
$$\overline{\left\{  \frac{\prod_{m=0}^{n}w(\varphi^m(z_1))}{\prod_{m=0}^{n}w(\varphi^m(z_2))}  , \ n\in \N \right\}}=\mathbb{\C}.\quad $$
\end{proposition}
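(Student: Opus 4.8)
The plan is to observe that the quotient in the first display equals $\W^n f(z_1)/\W^n f(z_2)$, and then to transport the $\tau_p$-supercyclicity of $f$ to a density statement in $\C^2$ through the pair of point evaluations $(\delta_{z_1},\delta_{z_2})$; this is the mechanism used in \cite[Proposition 1.26]{BayartMatheron} and \cite[Lemma 1]{Peris2001}. I would begin by recording the iteration formula $\W^n f(z)=\big(\prod_{m=0}^{n-1}w(\varphi^m(z))\big)f(\varphi^n(z))$, obtained by a one-line induction from $\W g(z)=w(z)\,g(\varphi(z))$. In particular, the set appearing in the first display is precisely $\{\W^n f(z_1)/\W^n f(z_2):n\in\N,\ \W^n f(z_2)\neq0\}$, since $w$ is zero-free and hence $\W^n f(z_2)=0$ exactly when $f(\varphi^n(z_2))=0$.

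For the first statement, I would use that $\delta_{z_1}$ and $\delta_{z_2}$, being linearly independent in $E'$, make the linear map $(\delta_{z_1},\delta_{z_2})\colon E\to\C^2$ both $\tau_p$-continuous and surjective (a pair of linearly independent linear functionals always maps onto $\C^2$). Applying it to the $\tau_p$-dense set $\{\lambda\,\W^n f:\lambda\in\C,\ n\in\N\}$ yields that $\{\lambda\,(\W^n f(z_1),\W^n f(z_2)):\lambda\in\C,\ n\in\N\}$ is dense in $\C^2$. The conclusion is then elementary: given $c\in\C$ and $\eps>0$, I would approximate the point $(c,1)\in\C^2$ to within some $\delta<1$ by a vector $\lambda(\W^n f(z_1),\W^n f(z_2))$; then $\lambda\W^n f(z_2)\neq0$, so $\W^n f(z_2)\neq0$, and $\W^n f(z_1)/\W^n f(z_2)=(\lambda\W^n f(z_1))/(\lambda\W^n f(z_2))$ lies within $(1+|c|)\delta/(1-\delta)$ of $c$, which is $<\eps$ for $\delta$ small.

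For the second statement, I would first note that $a$ and $b$ are fixed points of $\varphi$ (uniqueness of limits in the Hausdorff space $X$ and continuity of $\varphi$), so Remark \ref{wfno0} applied at $z=a$ and at $z=b$ forces $f(a)\neq0$ and $f(b)\neq0$. Put $P_n(z):=\prod_{m=0}^{n}w(\varphi^m(z))$, $x_n:=\W^{n+1}f(z_1)/\W^{n+1}f(z_2)$ and $d_n:=f(\varphi^{n+1}(z_2))/f(\varphi^{n+1}(z_1))$. From $\W^{n+1}f(z)=P_n(z)\,f(\varphi^{n+1}(z))$ one gets $P_n(z_1)/P_n(z_2)=x_n\,d_n$ for every $n$ with $f(\varphi^{n+1}(z_1))$ and $f(\varphi^{n+1}(z_2))$ nonzero — which holds for all $n$ large, by continuity of $f$, since those values tend to $f(a)\neq0$ and $f(b)\neq0$; moreover $d_n\to f(b)/f(a)\neq0$. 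By the first statement, and since deleting finitely many points from a subset of $\C$ whose closure is $\C$ leaves the closure unchanged ($\C$ having no isolated points), $\overline{\{x_n:n\geq N\}}=\C$ for every $N$. I would then conclude with the routine lemma: if $\overline{\{x_n:n\geq N\}}=\C$ for all $N$ and $d_n\to d\neq0$, then $\overline{\{x_n\,d_n\}}=\C$; indeed, given $c$ and $\eps$, one first chooses $N$ so that $d_n$ stays close to $d$ and bounded away from $0$ for $n\geq N$, then picks $n\geq N$ with $x_n$ close to $c/d$. Since $P_n(z_1)/P_n(z_2)$ is in fact defined for every $n$ ($w$ being zero-free), adding back the finitely many small indices does not shrink the closure, so it equals $\C$.

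The crux of the argument is the reduction to the two-dimensional density statement via $(\delta_{z_1},\delta_{z_2})$; there is no deep obstacle. The one point requiring care is purely bookkeeping: the quotients $x_n$, $d_n$ and the quotient in the first display are defined only at those $n$ where the relevant values of $f$ do not vanish, so one must verify that these indices are cofinal — which is built into the statement of the first part, and in the second part follows from $\varphi^n(z_i)\to a,b$ together with $f(a),f(b)\neq0$ — and then argue that discarding the finitely many remaining indices does not change the closures.
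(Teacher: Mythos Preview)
Your proposal is correct and follows essentially the same route as the paper: both reduce the first claim to density in $\C^2$ via the $\tau_p$-continuous surjection $(\delta_{z_1},\delta_{z_2})\colon E\to\C^2$, then approximate $(c,1)$ and take the quotient; for the second claim both invoke Remark \ref{wfno0} to ensure $f(a),f(b)\neq 0$ and pass to the limit in the $f$-factors. Your write-up is more explicit about the bookkeeping (handling the indices where $f(\varphi^n(z_i))$ may vanish and the ``$d_n\to d\neq0$'' lemma), whereas the paper leaves these routine points implicit.
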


\begin{proof}
Let $z_1,z_2\in X$ be such that $z_1\neq z_2.$ The mapping $F: E\rightarrow \C^2$ defined as $F(g)=(g(z_1),g(z_2))$ is $\tau_p$-continuous and, since $\{\delta_{z_1}, \delta_{z_2}\}$ is linearly independent in $E^{'},$ it follows that $F$  is surjective.
This implies that if  $f\in E$ is a $\tau_p$-supercyclic vector of $\W$, then the set $$\left\{\left(\lambda\prod_{m=0}^{n-1}w(\varphi^m(z_1))f(\varphi^n(z_1)), \lambda\prod_{m=0}^{n-1}w(\varphi^m(z_2)) f(\varphi^n(z_2))\right) :\lambda\in\mathbb{\C},n\in\N\right\}$$
is dense in $\mathbb{\C}^2.$  Thus, given $c\in\mathbb{\C}\backslash\{0\}$, there exists an increasing  sequence $(n_k)_k$ such that $\lambda_{n_k}\neq0$, $ f(\varphi^{n_k}(z_i))\neq 0$ for $i=1,2,$ and
$$\left(\lambda_{n_k}\prod_{m=0}^{n_k-1}w(\varphi^m(z_1))f(\varphi^{n_k}(z_1)), \lambda_{n_k}\prod_{m=0}^{n_k-1}w(\varphi^m(z_2))f(\varphi^{n_k}(z_2))\right)\rightarrow \left(c,1\right).$$
As a consequence,  $$\lim_k\frac{\prod_{m=0}^{n_k-1}w(\varphi^m(z_1))f(\varphi^{n_k}(z_1))}{\prod_{m=0}^{n_k-1}w(\varphi^m(z_2))f(\varphi^{n_k}(z_2))}=c$$ and  the first part of the  proposition holds. \\
If in addition, $(\varphi^n(z_1))_n$ and $(\varphi^n(z_2))_n$ are convergent to the fixed points $a$ and $b$ in $X,$ respectively, since $\lim_nf(\varphi^n(z_1))=f(a)\neq 0$ and $\lim_nf(\varphi^n(z_2))=f(b)\neq 0$ by Remark \ref{wfno0}, we get
$$\overline{\left\{  \frac{\prod_{m=0}^{n}w(\varphi^m(z_1))}{\prod_{m=0}^{n}w(\varphi^m(z_2))}  , \ n\in \N \right\}}=\mathbb{\C}.\quad $$
\end{proof}

\begin{remark}\label{Prop 4 suau}
	
Observe that in Proposition \ref{necessary_wf} we do not need all evaluations $\delta_z,$ $z\in X,$ to be linearly independent.  It is enough to assume that $\{\delta_{z_1}, \delta_{z_2}\}$ is linearly independent. Note that $w(z)$ can be zero  if $\delta_z=0$ in $E',$ but if $\delta_z\neq 0,$ then $w(\varphi^n(z)) \neq 0$ for every $n\in \N.$ Just consider an increasing sequence $(n_k)_k$ such that	$\lambda_{n_k}\prod_{m=0}^{n_k-1}w(\varphi^m(z))f(\varphi^{n_k}(z))$ tends to $1$.
\end{remark}

Now we present one of the main results of this section, which solves in the negative  the  problem of weak supercyclicity of the weighted composition operator on $E$ whenever  $X$ is compact.

\begin{theorem}\label{CXcompact}
	Let $X$ be compact and let $E$ be a Banach space satisfying  $E\hookrightarrow (C(X), \|\ \|_{\infty})$ and containing a nowhere vanishing function. The weighted composition operator $\W: E \rightarrow E$  is never weakly supercyclic.
\end{theorem}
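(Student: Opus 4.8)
The plan is to argue by contradiction, assuming $\W$ is weakly supercyclic on $E$, and to extract a quantitative contradiction from Proposition \ref{necessary_wf}. By Proposition \ref{necessaryCK} we may assume $w$ is zero-free on $X$ and $\varphi$ is univalent. First I would exploit compactness of $X$: since $X$ is compact and $w$ is continuous and nonvanishing, both $|w|$ and $1/|w|$ are bounded on $X$, say $0 < m \le |w(x)| \le M$ for all $x \in X$. This controls each factor $w(\varphi^m(z))$ uniformly. Likewise, a nowhere vanishing function $g \in E$ is, by compactness, bounded below in modulus, $|g(x)| \ge c > 0$, and bounded above by $\|g\|_\infty$.

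The key step is to use $g$ (or rather, the fact that $g$ is a valid test vector and that weak supercyclicity gives genuine weak — not just pointwise — density) to bound the ratios appearing in Proposition \ref{necessary_wf}. The subtlety is that Proposition \ref{necessary_wf} as stated only uses $\tau_p$-supercyclicity and gives denseness in $\C$ of the ratios $\prod_{m=0}^{n-1} w(\varphi^m(z_1)) f(\varphi^n(z_1)) / \big(\prod_{m=0}^{n-1} w(\varphi^m(z_2)) f(\varphi^n(z_2))\big)$ for a $\tau_p$-supercyclic $f$; this alone is not contradictory. The extra leverage from \emph{weak} supercyclicity should come from testing against a functional built from the nowhere-vanishing function: one wants to say that if $f$ is weakly supercyclic then so is $f \circ \varphi$ (cf. Remark \ref{wfno0}), that $f(\varphi^n(z))$ cannot decay too fast or grow too fast relative to the weight products, and — crucially — that the orbit $\{\lambda \W^n f\}$ being weakly dense forces a uniform-in-$n$ comparison between $\|\W^n f\|_\infty$ and the individual values $|\W^n f(z_i)|$ that is incompatible with the two-point denseness at $z_1$ and $z_2$ simultaneously. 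Concretely, I expect to pick two points $z_1 \neq z_2$, observe $\W^n f(z_i) = \big(\prod_{m=0}^{n-1} w(\varphi^m(z_i))\big) f(\varphi^n(z_i))$, and use the compactness bounds $m^n \le |\prod_{m=0}^{n-1} w(\varphi^m(z_i))| \le M^n$ together with $|f(\varphi^n(z_i))| \le \|f\|_\infty$ to trap $|\W^n f(z_i)|$ in $[0, M^n \|f\|_\infty]$, and a matching lower bound on the "large" coordinate; the ratio $|\W^n f(z_1)|/|\W^n f(z_2)|$ is then squeezed between $(m/M)^n \cdot |f(\varphi^n(z_1))|/|f(\varphi^n(z_2))|$ and its reciprocal-type counterpart, which still leaves the $f$-values uncontrolled. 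To close this gap I would replace $f$ by the comparison with $g$: weak supercyclicity lets one apply a functional $\psi \in E^*$ that separates, and using that $E \hookrightarrow (C(X), \|\cdot\|_\infty)$ continuously, the evaluations $\delta_{z_i}$ are bounded functionals, so the argument of Proposition \ref{necessary_wf} can be upgraded by also demanding $\|\lambda \W^n f - g\|_\infty$ be small — but that is a norm condition, not available from weak supercyclicity. The right move is instead: from weak supercyclicity and $E\hookrightarrow(C(X),\|\cdot\|_\infty)$, the set $\{\lambda \W^n f\}$ is weakly dense, hence (its closure being convex) norm-dense is false, but one still gets that for the nowhere-vanishing $g$ and any $\psi\in E^*$, $\psi(\lambda\W^n f)\to\psi(g)$; choosing $\psi = \delta_{z_1}$ and separately $\psi=\delta_{z_2}$ recovers exactly the two-point statement, and the needed additional constraint comes from a \emph{third} point or from using that $X$ compact forces the sequence $\varphi^n(z)$ to have convergent subnets, so one can pass to the "in addition" clause of Proposition \ref{necessary_wf} along a subnet where $\varphi^{n_k}(z_1)\to a$, $\varphi^{n_k}(z_2)\to b$, giving that $\{\prod_{m=0}^{n}w(\varphi^m(z_1))/\prod_{m=0}^{n}w(\varphi^m(z_2))\}$ is dense in $\C$ — now a clean statement with no $f$-values.

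With the "in addition" clause in force along a subnet, the final contradiction is metric: the ratio $R_n := \prod_{m=0}^{n}w(\varphi^m(z_1))/\prod_{m=0}^{n}w(\varphi^m(z_2))$ satisfies $|R_{n+1}/R_n| = |w(\varphi^{n+1}(z_1))|/|w(\varphi^{n+1}(z_2))| \in [m/M, M/m]$, so consecutive ratios are within a \emph{bounded multiplicative factor} of each other; a sequence with that property whose terms must approach every point of $\C$ — in particular must get within $\tfrac12$ of $0$ infinitely often and also must exceed modulus $N$ for every $N$ — cannot exist, because once $|R_n|$ is very small it takes many steps (at least $\log(N)/\log(M/m)$) to climb back above $N$, and meanwhile denseness near a fixed large value like $2N$ is violated. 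More precisely, I would show $\{|R_n|\}$ is dense in $[0,\infty)$, but the multiplicative-bounded-increment property forces $\{|R_n|\}$ to be, up to a bounded ratio, "continuous" along $n$, so it cannot simultaneously accumulate at $0$ and be unbounded while staying dense — or, even more simply, note that if $w$ had $|w(a)|=|w(b)|$ at the limit points then $|R_n|\to$ a bounded nonzero limit, contradicting denseness, and if $|w(a)|\neq|w(b)|$ then $|R_n|\to 0$ or $\infty$, again contradicting denseness in $\C$.

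The main obstacle I anticipate is precisely the passage from pointwise information (two-point denseness, which alone is not contradictory) to something genuinely contradictory: the compactness of $X$ is what rescues the argument, both by giving uniform bounds $m \le |w| \le M$ and by guaranteeing convergent subnets of $(\varphi^n(z))_n$ so that the cleaner "in addition" conclusion of Proposition \ref{necessary_wf} applies. I would need to be careful that $X$ compact does not immediately give convergent \emph{sequences} $\varphi^n(z)$ (only subnets, or subsequences if $X$ is metrizable), so either I work with subnets throughout, or — more likely, matching the level of generality the authors seem to want — I restrict attention to a metrizable situation or use the net version of Proposition \ref{necessary_wf}'s proof, which only used a density argument in $\C^2$ and goes through verbatim along a subnet. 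Modulo that bookkeeping, the heart of the proof is the elementary observation that a sequence of complex numbers with multiplicatively bounded consecutive ratios, living in a compact dynamical picture, cannot be dense in $\C$.
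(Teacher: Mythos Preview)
Your proposal has two genuine gaps, and misses both of the key ideas in the paper's proof.

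First, you never obtain a \emph{lower} bound on $|f(\varphi^n(z))|$ for the supercyclic vector $f$. You correctly identify this as ``the $f$-values uncontrolled,'' but your attempted fixes do not close it. The route via the ``in addition'' clause of Proposition \ref{necessary_wf} fails for two reasons: compactness of $X$ gives only convergent \emph{subnets} of $(\varphi^n(z_i))_n$, whose limits need not be fixed points of $\varphi$ (they lie in the $\omega$-limit set, which can be much larger); and even if they were fixed points, the clause would only yield density of the $R_{n_k}$ along the subnet, so your consecutive-ratio bound $|R_{n+1}/R_n|\in[m/M,M/m]$ is no longer relevant. Your alternative endgame (``if $|w(a)|=|w(b)|$ then $|R_n|$ converges'') is also invalid, since the full orbits do not converge. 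More fundamentally, a sequence $(R_n)_n$ with $|R_{n+1}/R_n|$ uniformly bounded \emph{can} be dense in $\C$ (think of $R_n=\exp(S_n)$ with $(S_n)$ a suitable bounded-step walk in $\C$), so that line of attack is hopeless.

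The paper's proof supplies the two missing ingredients. The first is that weak supercyclicity (as opposed to mere $\tau_p$-supercyclicity) guarantees, by Sanders' result, that the weakly supercyclic vectors are \emph{norm} dense in $E$, hence $\|\cdot\|_\infty$-dense; this is precisely how the nowhere-vanishing hypothesis is used: one picks a supercyclic $f$ with $\|f-g\|_\infty$ small, so that $f$ itself satisfies $|f(x)|\ge\epsilon>0$ on $X$. The second is a telescoping choice of points: take $x_1=\varphi(x_0)$ (some $x_0$ with $\varphi(x_0)\neq x_0$ exists since multiplication operators are not weakly supercyclic). Then the two weight-products in the ratio of Proposition \ref{necessary_wf} differ only by a shift of index, and the quotient collapses to
\[
\left|\frac{w(\varphi^{n}(x_0))\,f(\varphi^{n+1}(x_0))}{w(x_0)\,f(\varphi^{n}(x_0))}\right|
\ \le\ \frac{\|w\|_\infty\,\|f\|_\infty}{|w(x_0)|\,\epsilon},
\]
a fixed bound independent of $n$, contradicting denseness in $\C$. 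Both ideas---Sanders' norm-density and the telescoping choice $z_2=\varphi(z_1)$---are essential, and neither appears in your proposal.
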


\begin{proof}
	If $\W: E\rightarrow E$  is weakly supercyclic, the set of weakly supercyclic vectors is norm dense in $E,$ and thus, $\|\ \|_{\infty}$-dense \cite[Proposition 2.1]{Sanders 2004}. Then, by the hypothesis of existence of nowhere vanishing functions on $E$ we can easily get a weakly supercyclic function $f$ such that $\epsilon\leq |f(x)|$ for every $x\in X$ and for some $\epsilon>0.$ 	Since the multiplication operator $M_w: E\rightarrow E$  is not weakly supercyclic by \cite[Proposition 1.26]{BayartMatheron} because $\{\delta_x, \ x\in X\}$  is a set of  independent eigenvectors of $E^*$,  we can assume that there exists some $x_0\in X$ such that   $x_1=\varphi(x_0)\neq x_0.$  Therefore, as $X$ is compact and $w$ and $f$ are continuous and have no zeros in $X,$ there exists $C>0$ such that $$\left|\frac{\prod_{m=0}^{n-1}w(\varphi^m(x_1))f(\varphi^n(x_1))}{\prod_{m=0}^{n-1}w(\varphi^m(x_0))f(\varphi^n(x_0))}\right|=\left|\frac{w(\varphi^{n}(x_0))f(\varphi^{n+1}(x_0))}{w(x_0)f(\varphi^{n}(x_0))}\right|\leq C$$
	for every $ n\in \N,$	a contradiction by Proposition \ref{necessary_wf}.
\end{proof}

%\section{Pointwise supercyclicity on spaces of continuous functions}\label{sec3}

 Our main results apply to $C(X)$ when $X$ is compact, but relaxing this hypothesis and not requiring all evaluations $\delta_z,$ $z\in X,$ to be linearly independent in Theorem \ref{noweakcases},  permits us to obtain more general results which are useful in the following section. We give sufficient conditions that ensure the operator $\W$ is not  $\tau_p$-supercyclic. We first introduce the following definition.

\begin{definition}
	Let $\varphi: X\rightarrow X$ and let $z_0\in X$ be a fixed point of $\varphi.$ $\varphi$ is said to have  \emph{stable orbits around $z_0$} if there exists a fundamental family of connected compact neighbourhoods of $z_0,$ $(V_j)_j\subseteq X$  such that $\varphi(V_j)\subseteq V_j $  for every $j\in \N.$
\end{definition}    %(see \cite{bonet_domanski2011a}).
% $\varphi: K\rightarrow K$ has \emph{stable orbits} if for every compact subset $L\subseteq K$ there is a compact subset $V\subseteq K$ such that
%$\varphi^n(L) \subseteq V$ for $n\in \N.$ This condition is equivalent to the existence of a fundamental family of connected compact sets $(V_j)_j\subseteq K$  such that $\varphi(V_j)\subseteq L_j $  for every $j\in \N$ (see \cite{bonet_domanski2011a}).

\begin{theorem}\label{noweakcases}
Let $X$ be a topological Hausdorff space and let $E\hookrightarrow (C(X),\tau_p)$.  If any of the following conditions holds, then the operator $\W: E \rightarrow E$ is not $\tau_p$-supercyclic:
\begin{itemize}
	\item [i)] $\varphi$ has two  fixed points $\{z_1,z_2\}$ such that the evaluations $\{\delta_{z_1},\delta_{z_2}\}\subseteq E'$ are linearly independent.
	 	\item [ii)] there exists an orbit $\{\varphi^n(z_1), n=0,1, \dots\},$ $z_1\in X,$ non-constant and convergent to an element $z_0\in X$ satisfying $ \delta_{z_0}\neq 0$ and $\{\delta_{z_1}, \delta_{\varphi(z_1)}\}$ linearly independent in $E'$.
	 		\item [iii)] $X$ is compact and $\varphi$ has a periodic (not fixed) point $z_1$ satisfying  that  $\{\delta_{z_1}, \delta_{\varphi(z_1)}\}$  are linearly independent in $E'$.
	 		\item [iv)]  $X$ is compact, $\varphi$ has a fixed point $z_2$ such that $|w(z)|\leq |w(z_2)|$ for all $z\in X$ and there exists $z_1\neq z_2$ such that $\{\delta_{z_1},\delta_{z_2}\}$ are linearly independent in $E'$.
	 		\item [v)] All evaluations $\delta_z,$ $z\in X,$ are linearly independent,   $\varphi$ has a fixed point $z_0$ such that $z_0$ is an accumulation point of $X$, and $\varphi$ has stable orbits around $z_0$.
	 			%$\varphi$ has a fixed point $z_0$ and there exists a neighbourhood $U$ of $z_0$ such that $\varphi(V)\subset V$ for all neighbourhood $V\subset U$ of $z_0.$
\end{itemize}
\end{theorem}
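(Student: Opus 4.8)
The plan is to assume $\W:E\to E$ is $\tau_p$-supercyclic and to derive a contradiction in each of the five cases. In every case the contradiction will come from Proposition \ref{necessary_wf}: I would produce a pair $z_1\neq z_2$ with $\{\delta_{z_1},\delta_{z_2}\}$ linearly independent for which the set occurring there turns out to be \emph{bounded}, hence cannot have closure equal to $\C$. The recurring tools are Remark \ref{Prop 4 suau} (so that $w(\varphi^n(z))\neq 0$ for all $n$ whenever $\delta_z\neq 0$), the argument of Remark \ref{wfno0} applied at a point with nonzero evaluation (so that a $\tau_p$-supercyclic $f$ is nonzero somewhere on each forward orbit; in particular $f(z_0)\neq 0$ at a fixed point $z_0$ with $\delta_{z_0}\neq 0$), and the telescoping identity: if $z_2=\varphi(z_1)$ then $\varphi^n(z_2)=\varphi^{n+1}(z_1)$ and
\[
  \frac{\prod_{m=0}^{n-1}w(\varphi^m(z_1))}{\prod_{m=0}^{n-1}w(\varphi^m(z_2))}=\frac{w(z_1)}{w(\varphi^n(z_1))}.
\]

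The first four cases are short. For (i) I would take $z_1,z_2$ to be the two fixed points; then $\varphi^m(z_i)\equiv z_i$, and since $w(z_1),w(z_2)\neq 0$ and $f(z_2)\neq 0$ the set of Proposition \ref{necessary_wf} is $\{(w(z_1)/w(z_2))^{n}f(z_1)/f(z_2):n\in\N\}$, whose moduli form a geometric progression, so it is never dense in $\C$. For (ii) I would set $z_2=\varphi(z_1)$, which differs from $z_1$ because the orbit is non-constant; by continuity of $\varphi$ and the Hausdorff property $z_0:=\lim_n\varphi^n(z_1)$ is a fixed point and $\varphi^n(z_2)\to z_0$ as well, so the second part of Proposition \ref{necessary_wf} combined with the telescoping identity forces $\{w(z_1)/w(\varphi^{n+1}(z_1)):n\in\N\}$ to be dense in $\C$, while in fact it converges to $w(z_1)/w(z_0)\neq 0$. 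For (iii), again with $z_2=\varphi(z_1)$, the forward orbit $O$ of $z_1$ is finite, $w$ is zero-free on $O$, and $f$ is not identically zero on $O$, so the telescoped set $\{\frac{w(z_1)}{w(\varphi^n(z_1))}\cdot\frac{f(\varphi^n(z_1))}{f(\varphi^{n+1}(z_1))}:n\in\N,\ f(\varphi^{n+1}(z_1))\neq 0\}$ is bounded. For (iv), taking $z_2$ to be the given fixed point, $|w(\varphi^m(z_1))|\leq|w(z_2)|$ for every $m$ and, by compactness of $X$, $|f(\varphi^n(z_1))|\leq\|f\|_\infty<\infty$, so the set of Proposition \ref{necessary_wf} is bounded by $\|f\|_\infty/|f(z_2)|$.

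The main obstacle is (v): against a fixed point the factor $\prod_{m=0}^{n-1}w(\varphi^m(z_1))$ can grow geometrically, so the naive estimate fails; the fix is to telescope by comparing $z_1$ with $z_2=\varphi(z_1)$, fed with a $z_1$ taken very close to $z_0$. First, I would assume $z_0$ is the only fixed point of $\varphi$, since otherwise case (i) applies (all evaluations are independent). As $w(z_0)\neq 0$ and $f(z_0)\neq 0$, both $w$ and $f$ are zero-free on some neighbourhood of $z_0$, which contains some $V_j$ of the fundamental family; since $z_0$ is an accumulation point I can choose $z_1\in V_j\setminus\{z_0\}$, and then $z_1\neq\varphi(z_1)=:z_2$. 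By stability of orbits the whole forward orbit of $z_1$ stays inside the compact set $V_j$, so $w$ and $f$ are bounded and bounded away from $0$ along it and $f(\varphi^{n+1}(z_1))\neq 0$ for every $n$; after telescoping, the set of Proposition \ref{necessary_wf} is $\{w(z_1)f(\varphi^n(z_1))/(w(\varphi^n(z_1))f(\varphi^{n+1}(z_1))):n\in\N\}$, which is bounded by $|w(z_1)|\sup_{V_j}|f|\,/\,(\inf_{V_j}|w|\cdot\inf_{V_j}|f|)$ --- contradicting the density statement of Proposition \ref{necessary_wf}. (Connectedness of the $V_j$ plays no role in this argument.)
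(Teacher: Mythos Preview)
Your proof is correct and follows essentially the same route as the paper's: in each case you pick the same pair of points (the two fixed points in (i), $z_1$ and $\varphi(z_1)$ in (ii), (iii), (v), and $z_1$ against the fixed point $z_2$ in (iv)), telescope the product quotient exactly as the paper does, and contradict Proposition~\ref{necessary_wf}. Your write-up is in places slightly tidier than the paper's --- you bound (iii) on the finite orbit rather than on all of $X$, and you correctly note that the connectedness of the $V_j$ in (v) is never used --- but the argument is the same.
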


\begin{proof}
Assume $\W$ is $\tau_p$-supercyclic and  $f\in E$ is a $\tau_p$-supercyclic function.  For $z_1$ and $z_2$  as in (i)  the set $\left\{\left(\frac{w(z_1)}{w(z_2)}\right)^n\frac{f(z_1)}{f(z_2)}:n\in\N\right\}$ cannot be dense in $\C,$ since it converges to 0, diverges to infinity or lies in $r\partial \D$ for $r=\frac{f(z_1)}{f(z_2)}$.  Applying Proposition \ref{necessary_wf} and Remark \ref{Prop 4 suau}, we get a contradiction. \\
(ii) Assume $z_2=\varphi(z_1)\neq z_1$ and let $z_0=\lim_n\varphi^n(z_1)$. Then,  $\lim_n\frac{\prod_{m=0}^{n-1}w(\varphi^m(z_1))f\circ\varphi^n(z_1)}{\prod_{m=0}^{n-1}w(\varphi^m(z_2))f\circ\varphi^n(z_2)}=\frac{w(z_1)}{w(z_0)}$. Applying Proposition \ref{necessary_wf} and Remark \ref{Prop 4 suau},  we get that $\W$ is not $\tau_p$- supercyclic. \\
(iii) From the hypothesis we can get a periodic point $z_1\in X$   of $\varphi$ and consider $z_2=\varphi(z_1).$  By Remark \ref{wfno0} there is $j\in \N$ such that $f(\varphi^j(z_1))\neq 0$ and also there exists $C_1=\max_{z\in X}|f(z)|\max_{z\in X}|w(z)|$ and $C_2=\min_{\{n\in \N: f(\varphi^n(z_1))\neq 0\}}|f(\varphi^n(z_1))|>0$ satisfying:
$$\left|\frac{\prod_{m=0}^{n-1}w(\varphi^m(z_2))f(\varphi^n(z_2))}{\prod_{m=0}^{n-1}w(\varphi^m(z_1))f(\varphi^n(z_1))}\right|=\left|\frac{w(\varphi^{n}(z_1))f(\varphi^{n+1}(z_1))}{w(z_1)f(\varphi^{n}(z_1))}\right|\leq\frac{C_1}{C_2|w(z_1)|}$$
for all $n\in \N$ such that the quotients are well defined.
Applying again Proposition \ref{necessary_wf} and Remark \ref{Prop 4 suau},  we get the contradiction\\
(iv) By Remark \ref{wfno0}, $f(z_2)\neq 0.$ Also from $\delta_{z_2}\neq 0$ in $E'$ we get $w(z_2)\neq 0$.  Let $z_1\in X$  be as in the hypothesis. Given $M=\max\{|f(z)|:z\in X\},$ we have
	$$\left | \frac{\prod_{m=0}^{n-1}w(\varphi^m(z_1))f(\varphi^{n}(z_1))}{ w(z_2)^nf(z_2)} \right |\leq \frac{M}{|f(z_2)|},\ \mbox{for all}\  n\in\N,  z\in X.$$
Applying Proposition \ref{necessary_wf} and Remark \ref{Prop 4 suau}   we get that $\W$ is not $\tau_p$- supercyclic.\\
(v)  By hypothesis and Remark \ref{wfno0}, there exists a compact neighbourhood $V$ of $z_0$ such that $\varphi(V)\subseteq V$ and $f(v)\neq 0$ for all $v\in V.$ Let $z_1\in V\backslash\{z_0\}$ and assume $z_2=\varphi(z_1)\neq z_1$ (otherwise, apply (i)).  It follows that	
$$\left | \frac{\prod_{m=0}^{n-1}w(\varphi^m(z_1))f({\varphi}^{n}(z_1))}{\prod_{m=0}^{n-1}w(\varphi^m(z_2))f({\varphi}^{n}(z_2))} \right |= 	\left | \frac{w(z_1)f({\varphi}^{n}(z_1))}{w(\varphi^n(z_1))f({\varphi}^{n+1}(z_1))} \right| \leq  \frac{M_1M_2}{M_3},\ \mbox{for all}\  n\in\N,$$
where  $M_1=\frac{\max\{|w(z)|:z\in X\}}{\min\{|w(z)|:z\in V\}},$ $M_2=\max\{|f(z)|:z\in V\}$ and $M_3=\min\{|f(v)|:v\in V\}$. From Proposition \ref{necessary_wf}, $\W$ cannot be $\tau_p$-supercyclic.
\end{proof}

\begin{remark}
 As a consequence of (ii) or (v),  for $E=H(\D)$ the weighted composition operator is not $\tau_p$-supercyclic if $\varphi$ has a fixed point in $\D.$  Even more, if $\varphi:\D\to\D$ is holomorphic and has a fixed point then $\W$ is not pointwise supercyclic on $(C(\D),\tau_p)$.
\end{remark}

%In  \cite[Corollary 10]{Moradietal2017} the authors show that the composition operator on the  spaces $A(\D)$ and $ Lip_{\alpha}(\D)$ is never weakly supercyclic. We extend this result to every weighted composition operator and the pointwise topology $\tau_p$ on $A(\D).$

% in Theorem \ref{disk} we improve the results obtained in \cite{Moradietal2017} showing that no weighted composition operator $\W$ is  $\tau$-supercyclic on the disc algebra $A(\D)$ and on the analytic Lipschitz spaces $ Lip_{\alpha}(\D)$ for $0<\alpha\leq1,$ for $\tau$ a topology finer than the pointwise convergence topology at  two different points of $X.$

In  \cite[Corollary 10]{Moradietal2017} the authors show that the composition operator on the  spaces $A(\D)$ and $ Lip_{\alpha}(\D)$ is never weakly supercyclic. In the next theorem we prove that $\tau$-supercyclicity on these spaces is not possible even for a  general weighted composition $\W$ and with respect to weaker topologies.
%showing that no weighted composition operator $\W$ is  $\tau$-supercyclic on the disc algebra $A(\D)$ and on the analytic Lipschitz spaces $ Lip_{\alpha}(\D)$ for $0<\alpha\leq1,$ for $\tau$ a topology finer than the pointwise convergence topology at  two different points of $\D.$

\begin{theorem}\label{disk}
If $X=\overline{\D}$ and $\varphi \in A(\D),$ then the weighted composition operator $\W: E\rightarrow E$ is never $\tau_p$-supercyclic.   As a consequence, $\W$ is never $\tau_p$-supercyclic on the disc algebra $A(\D)$ and the analytic Lipschitz spaces $ Lip_{\alpha}(\D),$ $0<\alpha\leq1.$
\end{theorem}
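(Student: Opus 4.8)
The plan is to argue by contradiction: assume $\W$ is $\tau_p$-supercyclic and fix a $\tau_p$-supercyclic vector $f\in E$. By Proposition \ref{necessaryCK} we may assume $w$ is zero-free on $\overline{\D}$ and $\varphi$ is univalent; in particular $\varphi$ is non-constant, so the maximum modulus principle forces $\varphi(\D)\subseteq\D$, i.e. $\varphi|_{\D}$ is a holomorphic self-map of the open disc. I would then split into two cases according to whether $\varphi$ has a fixed point in $\D$.

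If $\varphi$ has a fixed point $a\in\D$, I would invoke the Schwarz--Pick lemma: $\varphi|_{\D}$ does not increase the Poincaré distance to $a$, hence each closed hyperbolic ball $\{z:\rho(z,a)\le r\}$, which is a compact connected neighbourhood of $a$ contained in $\D$, is mapped into itself by $\varphi$. These balls form a fundamental system of neighbourhoods of $a$ as $r\downarrow 0$, so $\varphi$ has stable orbits around $a$; since $a$ is an accumulation point of $\overline{\D}$ and all evaluations are linearly independent, Theorem \ref{noweakcases}(v) yields the contradiction. (This case absorbs $\varphi=\mathrm{id}$ and every elliptic automorphism.)

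If $\varphi$ has no fixed point in $\D$, then by the Denjoy--Wolff theorem there is a point $a\in\partial\D$ with $\varphi^n\to a$ uniformly on compact subsets of $\D$. Passing to the limit in $\varphi(\varphi^n(z))=\varphi^{n+1}(z)$ for a fixed $z\in\D$ and using continuity of $\varphi$ on $\overline{\D}$ gives $\varphi(a)=a$, so Remark \ref{wfno0} yields $f(a)\neq 0$. Now pick any $z_1\in\D$ (then automatically $\varphi(z_1)\neq z_1$) and put $z_2:=\varphi(z_1)$, so that $z_1\neq z_2$, both orbits remain in $\D$, and $\varphi^n(z_1),\varphi^{n+1}(z_1)\to a$. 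Feeding the pair $z_1,z_2$ into Proposition \ref{necessary_wf} and using $z_2=\varphi(z_1)$, the quotient there telescopes:
$$\frac{\prod_{m=0}^{n-1}w(\varphi^m(z_1))f(\varphi^n(z_1))}{\prod_{m=0}^{n-1}w(\varphi^m(z_2))f(\varphi^n(z_2))}=\frac{w(z_1)\,f(\varphi^n(z_1))}{w(\varphi^n(z_1))\,f(\varphi^{n+1}(z_1))},$$
and this converges to $w(z_1)/w(a)$ as $n\to\infty$ because $w(a)\neq0$ and $f(a)\neq0$. Hence the set in Proposition \ref{necessary_wf} is bounded --- all but finitely many of its elements lie near $w(z_1)/w(a)$, while $\min_{\overline{\D}}|w|>0$ and $\|f\|_{\infty}<\infty$ control the finitely many remaining ones --- so its closure cannot be $\C$, a contradiction. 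For the stated consequence it suffices to note that $A(\D)$ and $Lip_{\alpha}(\D)$ are continuously embedded in $(C(\overline{\D}),\tau_p)$ with linearly independent point evaluations, and that a weighted composition operator on such a space necessarily has symbol $\varphi\in A(\D)$.

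The step I expect to be the real obstacle is the case organization, not any single estimate: one has to recognize that exactly two mechanisms cover all holomorphic self-maps of $\D$ --- Schwarz--Pick invariant hyperbolic discs when there is an interior fixed point (this is what diverts the otherwise troublesome irrational elliptic automorphism, for which the telescoped quotient need not even be bounded along a single orbit, into Theorem \ref{noweakcases}(v)), and Denjoy--Wolff convergence to a boundary fixed point otherwise --- and, in the second case, that the quotient has a \emph{finite, nonzero} limit, which rests entirely on $f(a)\neq 0$, i.e. on Remark \ref{wfno0}.
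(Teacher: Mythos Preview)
Your proof is correct and follows essentially the same approach as the paper: split according to whether $\varphi$ has a fixed point in $\D$, invoke Schwarz--Pick/stable orbits (Theorem \ref{noweakcases}(v)) in the interior case, and Denjoy--Wolff convergence in the remaining case. The only difference is cosmetic: in your Case 2 you reproduce the telescoping argument and the limit $w(z_1)/w(a)$ explicitly, whereas the paper simply cites Theorem \ref{noweakcases}(ii), whose proof is precisely that computation.
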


\begin{proof}
By the Denjoy-Wolff theorem \cite[Theorem 0.2]{bourdon_shapiro}, if $\varphi$ is not the identity and not an automorphism with exactly one fixed point, then there is a unique (fixed) point $z_0\in \overline{\D}$  such that $(\varphi^n)_n$ converges to $z_0$  uniformly on the compact subsets of $\D.$ Theorem \ref{noweakcases} (ii) and (v) cover all the possible cases for $\varphi,$ therefore  $\W$ is never  $\tau_p$-supercyclic.
\end{proof}

\section{Weak supercyclicity on Fr\'echet spaces}\label{sec4}

Ansari and Bourdon proved in \cite{AnsariBourdon} that if $X$ is a Banach space and $T: X\to X$ is a power bounded and supercyclic operator, then $(T^n(x))_n$ converges to 0 for each $x\in X$. From this result it follows that isometries in Banach spaces are never supercyclic. Albanese and Jornet \cite{AlbaneseJornet2018} have recently extended this result for operators in locally convex spaces. Concretely, they show  that if $E$ is a locally convex space and  $T:E\to E$ is a supercyclic operator such that $(T^n)_n$ is an equicontinuous sequence in $L(E),$ then $(T^n(e))_n$ converges  to $0$ for any $e\in E$. In particular, this property applies to barrelled spaces, where the condition of equicontinuity of   $(T^n)_n$ is equivalent to the boundedness of the sequence $(T^n(e))_n$ in $E$ for any $e\in E$. As an application of Theorem \ref{noweakcases} we get below Ansari-Bourdon type results for weakly supercyclic operators.

% {\color{red}(Esto se va fuera pk ya se explica en teorema siguiente)
%Given an operator $T$ on a locally convex space $E,$ it follows that $(E, \omega)\hookrightarrow C((E', \omega^{*}), \tau_p)$ and $T=C_{T'},$ where $C_{T'}: C((E', \omega^{*}))\rightarrow C((E', \omega^{*})).$  Indeed, for all $u\in E'$ and $f\in E$ we have $\langle f, T' u \rangle= \langle Tf, u\rangle.$ }

\begin{theorem}\label{adjointlcs}
Let $E$ be a locally convex space and let $T:E\to E$ be a continuous linear weakly supercyclic operator. If $u\in E'$ is not a fixed point of  $T'$ and it satisfies that $({T'}^n(u))_n$ is weakly convergent to some (fixed point) $v\in E',$ then  $v=0$.
\end{theorem}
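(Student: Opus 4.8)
The plan is to reduce the statement to Theorem \ref{noweakcases}(ii) by transporting $T$ to an (unweighted) composition operator on a space of continuous functions over the one-point compactification of $\N_0:=\{0,1,2,\dots\}$. Assume, towards a contradiction, that $v\neq 0$. Note first that $v$ is automatically a fixed point of $T'$: since $T'$ is $\sigma(E',E)$-continuous, the sequence $({T'}^{n+1}u)_n$ converges in $\sigma(E',E)$ both to $T'v$ and to $v$, so $T'v=v$. Also $u\neq 0$, since otherwise $v=\lim_n{T'}^n u=0$.

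Let $X=\N_0\cup\{\infty\}$ be the one-point compactification of $\N_0$ with its discrete topology, a compact Hausdorff space, and let $\varphi:X\to X$ be the continuous map $\varphi(n)=n+1$, $\varphi(\infty)=\infty$. I would define $J:E\to C(X)$ by $(Jg)(n):=\langle {T'}^n u,g\rangle=\langle u,T^n g\rangle$ and $(Jg)(\infty):=\langle v,g\rangle$. Since ${T'}^n u\to v$ in $\sigma(E',E)$, each $Jg$ lies in $C(X)$; since $\delta_n\circ J={T'}^n u\in E'$ and $\delta_\infty\circ J=v\in E'$, the map $J$ is linear and continuous both as a map $E\to(C(X),\tau_p)$ and as a map $(E,\sigma(E,E'))\to(C(X),\tau_p)$. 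Using $T'v=v$ one verifies the intertwining identity $J\circ T=C_\varphi\circ J$ on $E$ (the identity at $\infty$ is precisely where $T'v=v$ is needed). Consequently $\widehat E:=J(E)$, with the topology induced by $(C(X),\tau_p)$, satisfies $\widehat E\hookrightarrow(C(X),\tau_p)$, and $C_\varphi$ maps $\widehat E$ into itself and is continuous there.

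Next I would transfer supercyclicity: if $f$ is a weakly supercyclic vector for $T$, then $\{\lambda T^n f:\lambda\in\C,\ n\geq0\}$ is $\sigma(E,E')$-dense in $E$, hence its image $\{\lambda C_\varphi^n(Jf):\lambda\in\C,\ n\geq0\}$ is $\tau_p$-dense in $\widehat E$, so $C_\varphi:\widehat E\to\widehat E$ is $\tau_p$-supercyclic. Now the orbit $\{\varphi^n(0):n\geq0\}=\N_0$ is non-constant and converges to $\infty$ in $X$; moreover, for $g\in E$ we have $\delta_0(Jg)=\langle u,g\rangle$, $\delta_1(Jg)=\langle T'u,g\rangle$ and $\delta_\infty(Jg)=\langle v,g\rangle$, so $\{\delta_0,\delta_1\}$ is linearly independent in $(\widehat E)'$ exactly when $\{u,T'u\}$ is linearly independent in $E'$, and $\delta_\infty\neq 0$ in $(\widehat E)'$ because $v\neq 0$. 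If $\{u,T'u\}$ is linearly independent, Theorem \ref{noweakcases}(ii) (with weight $w\equiv 1$) gives that $C_\varphi$ is not $\tau_p$-supercyclic on $\widehat E$, a contradiction. If $\{u,T'u\}$ is linearly dependent, then $T'u=\mu u$ for some $\mu\in\C$ with $\mu\neq 1$ (since $u\neq 0$ and $u$ is not a fixed point of $T'$); then ${T'}^n u=\mu^n u$, which converges, so $(\mu^n)_n$ converges (using $u\neq 0$), forcing $|\mu|<1$ and $\mu^n\to 0$, whence $v=\lim_n\mu^n u=0$, again a contradiction. Therefore $v=0$.

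The main obstacle is setting up this reduction cleanly: choosing the compactification so that the limit functional $v$ becomes evaluation at a fixed point $\infty$ of $\varphi$ (which forces $v$ to be a fixed point of $T'$, and is used to make $J$ intertwine $T$ and $C_\varphi$), checking that $J$ is continuous into $(C(X),\tau_p)$ and that the relevant point evaluations on $\widehat E$ correspond to $u$ and $T'u$ on $E$, and isolating the degenerate case where $u$ is an eigenvector of $T'$ — which Theorem \ref{noweakcases}(ii) does not reach and which is handled directly via the convergence of $(\mu^n)_n$.
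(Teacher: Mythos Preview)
Your proof is correct and follows essentially the same strategy as the paper: realize $T$ as a composition operator on a space of continuous functions and invoke Theorem \ref{noweakcases}(ii), then deal separately with the degenerate case where $u$ and $T'u$ are linearly dependent. The only difference is cosmetic: the paper embeds $(E,\omega)$ into $(C(X),\tau_p)$ with $X=(E',\omega^*)$ and $\varphi=T'$, so the intertwining $T=C_{T'}$ is automatic and the orbit $({T'}^n u)_n$ lives inside $X$ itself; you instead build a minimal compact model $X=\N_0\cup\{\infty\}$ carrying just that orbit and its limit, at the cost of having to verify $J\circ T=C_\varphi\circ J$ (which is exactly where $T'v=v$ enters). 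Both routes land on the same application of Theorem \ref{noweakcases}(ii) and the same elementary treatment of the eigenvector case.
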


\begin{proof}
Consider in $E'$  the weak-star topology $\omega^*.$  Observe that  $(E,\omega)$ can be identified as  a subspace of $(C(E'),\tau_p)$ by means of  $e(u)=\langle u,e\rangle$ for every $e\in E$ and $u\in E'.$ With this identification, $T=C_{\varphi}$ for $\varphi=T',$ as $ C_{T'}(e)(u)=(e\circ T')(u)= \langle e, T'u  \rangle= \langle Te, u\rangle$ for every $u\in E'.$ Now the result is a direct consequence of Theorem \ref{noweakcases} (ii). Observe that in the case $u$ and $T'(u)$ are not linearly independent, if $({T'}^n(u))_n=(\lambda^n u)_n,$ $\lambda\in \C,$ is weakly convergent to some $v$, then $v=0.$
\end{proof}

Assuming some  equicontinuity conditions on the operator with respect to the strong topology, the last theorem can be improved:

\begin{theorem}
\label{lcs}
Let $E$ be a locally convex space and $T:E\to E$  a continuous linear operator  which is weakly supercyclic and satisfies $q\circ T\leq q$  for a continuous norm $q$ of $E$. 
%Let $U=\{e\in E: \ q(e)\leq 1\}$. 
%Assume that the polar set $U^{\circ}$  contains two linear independent vectors from $E'$.
Then $\sigma_p(T)\cap \partial\D=\emptyset$ and  $\sigma_p(T')\cap \partial \D=\emptyset$. In particular, neither $T$ nor $T'$ have non zero fixed points.
	\end{theorem}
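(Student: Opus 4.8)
The plan is to reduce both conclusions to the single statement $\sigma_p(T')\cap\partial\D=\emptyset$, working through the composition-operator model used in the proof of Theorem \ref{adjointlcs}. Recall that $(E,\omega)$ sits inside $(C(E'),\tau_p)$ via $e\mapsto\hat e$, $\hat e(u)=\langle u,e\rangle$, where $E'$ carries $\sigma(E',E)$; under this identification $T$ becomes the composition operator $C_{T'}$, and $T$ is weakly supercyclic on $E$ iff $C_{T'}$ is $\tau_p$-supercyclic on $\hat E$. Fix a weakly supercyclic vector $f$. For $v\in E'$ put $q^\circ(v)=\sup\{|\langle v,x\rangle|:x\in E,\ q(x)\le1\}$ and $E'_q=\{v\in E':q^\circ(v)<\infty\}$, the space of $q$-continuous functionals. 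Since $q\circ T\le q$ one gets $q^\circ\circ T'\le q^\circ$ on $E'_q$, so $T'(E'_q)\subseteq E'_q$ and $\sup_n|\langle {T'}^nv,g\rangle|\le q^\circ(v)\,q(g)$ for all $v\in E'_q$, $g\in E$. (Since $E$ is infinite dimensional and $q$ is a norm, Hahn--Banach also shows that $E'_q$ separates points of $E$ and has dimension $\ge2$.)

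To prove $\sigma_p(T')\cap\partial\D=\emptyset$, I would suppose $T'u=\lambda u$ with $u\neq0$, $|\lambda|=1$. First $\langle u,f\rangle\neq0$, since otherwise $\langle u,cT^nf\rangle=c\lambda^n\langle u,f\rangle=0$ for all $c\in\C$, $n\in\N$, which would force $u$ to vanish on the weakly dense set $\{cT^nf\}$. Pick $v\in E'_q$ not proportional to $u$, so that $\{u,v\}$ is linearly independent in $E'$. Applying Proposition \ref{necessary_wf} together with Remark \ref{Prop 4 suau} to $C_{T'}$ (here $w\equiv1$) with the points $z_1=u$, $z_2=v$, and using $\hat f(\varphi^nz_i)=\langle {T'}^nz_i,f\rangle$ and $\langle {T'}^nu,f\rangle=\lambda^n\langle u,f\rangle$, we would obtain that $\{\lambda^n\langle u,f\rangle/\langle {T'}^nv,f\rangle:\langle {T'}^nv,f\rangle\neq0\}$ is dense in $\C$; but every element of this set has modulus $|\langle u,f\rangle|/|\langle {T'}^nv,f\rangle|\ge|\langle u,f\rangle|/(q^\circ(v)\,q(f))>0$, a contradiction. (Equivalently, and without invoking Proposition \ref{necessary_wf}: the map $x\mapsto(\langle u,x\rangle,\langle v,x\rangle)$ is a continuous surjection of $(E,\omega)$ onto $\C^2$, hence $\{(c\lambda^n\langle u,f\rangle,c\langle {T'}^nv,f\rangle):c\in\C,n\in\N\}$ would be dense in $\C^2$, which is impossible since the second coordinate tends to $0$ whenever the first does.)

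For $\sigma_p(T)\cap\partial\D=\emptyset$ the idea is to manufacture, from an eigenvector of $T$ on $\partial\D$, an eigenvector of $T'$ on $\partial\D$ by a mean-ergodic averaging, and then invoke the previous paragraph. Suppose $Te=\lambda e$, $e\neq0$, $|\lambda|=1$; as $q$ is a norm, $q(e)>0$, so Hahn--Banach on $(E,q)$ gives $e^*\in E'_q$ with $\langle e^*,e\rangle=1$. Form the Ces\`aro means $C_N=\frac1N\sum_{n=0}^{N-1}\bar\lambda^n\,{T'}^ne^*$ in $E'$. Since $q^\circ({T'}^ne^*)\le q^\circ(e^*)=:R$, the sequence $(C_N)$ lies in $\{v\in E':q^\circ(v)\le R\}$, that is $R$ times the polar of the $0$-neighbourhood $\{q\le1\}$, which is $\sigma(E',E)$-compact by the Alaoglu--Bourbaki theorem; let $u$ be a $\sigma(E',E)$-cluster point of $(C_N)$ and pass to a subnet with $C_N\to u$. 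From $\bar\lambda T'C_N-C_N=\frac1N(\bar\lambda^N{T'}^Ne^*-e^*)$, whose $q^\circ$-norm is at most $2R/N\to0$, together with the $\sigma(E',E)$-continuity of $T'$, one gets $\bar\lambda T'u=u$, i.e.\ $T'u=\lambda u$; and $\langle C_N,e\rangle=\frac1N\sum_{n=0}^{N-1}\bar\lambda^n\langle e^*,T^ne\rangle=\frac1N\sum_{n=0}^{N-1}\bar\lambda^n\lambda^n=1$ for every $N$, so $\langle u,e\rangle=1$ and $u\neq0$. Hence $\lambda\in\sigma_p(T')\cap\partial\D$, contradicting the previous step. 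The final assertion about fixed points is the case $\lambda=1$.

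The delicate part is this second step: the composition-operator model makes eigenvalues of $T'$ on $\partial\D$ transparent, but an eigenvalue of $T$ on $\partial\D$ only yields an eigenfunction of $C_{T'}$, which by itself does not obstruct $\tau_p$-supercyclicity, so one genuinely has to pass back to $T'$; the hypothesis $q\circ T\le q$ is exactly what keeps the averaging $q^\circ$-bounded so that Alaoglu--Bourbaki applies. Along the way one should also check two routine facts: that weak supercyclicity on the (infinite dimensional) space $E$ forces $T$ to have dense range, hence $\varphi=T'$ to be univalent, so that Proposition \ref{necessary_wf} applies verbatim; and that the choices of $v$ and $e^*$ in $E'_q$ with the stated properties are available, which is where $E$ being infinite dimensional and $q$ a norm are used.
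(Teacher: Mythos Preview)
Your argument is correct. The treatment of $\sigma_p(T')\cap\partial\D=\emptyset$ is essentially the paper's: both embed $(E,\omega)$ into $(C(X),\tau_p)$ with $T=C_{T'}$ and then invoke Proposition~\ref{necessary_wf} (the paper through Theorem~\ref{noweakcases}(iv), you directly). The paper takes $X=U^\circ=\{q\le1\}^\circ$ and uses compactness to bound $|\hat f(\varphi^n z_1)|$; you obtain the same bound by choosing the second point $v$ inside $E'_q$, which gives $|\langle {T'}^{n}v,f\rangle|\le q^\circ(v)\,q(f)$. These are the same idea packaged slightly differently.

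The step for $\sigma_p(T)\cap\partial\D=\emptyset$ is a genuinely different route. The paper first reduces to $\lambda=1$, takes $e_0$ with $Te_0=e_0$, $q(e_0)=1$, and observes that the slice $F(e_0)=\{u\in U^\circ:\langle u,e_0\rangle=1\}$ is nonempty, convex, $\omega^*$-compact and $T'$-invariant; a single application of the Schauder--Tychonoff fixed point theorem then produces a nonzero fixed point of $T'$, contradicting the first part. Your Ces\`aro/mean-ergodic construction $C_N=\tfrac1N\sum_{n=0}^{N-1}\bar\lambda^{\,n}{T'}^{n}e^*$ achieves the same goal without Schauder--Tychonoff: the hypothesis $q\circ T\le q$ traps $(C_N)$ in the $\omega^*$-compact set $q^\circ(e^*)\,U^\circ$, and any cluster point is a $\lambda$-eigenvector of $T'$ with $\langle u,e\rangle=1$. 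This is more elementary (Alaoglu--Bourbaki plus a telescoping identity, essentially the Markov--Kakutani mechanism) and handles all $\lambda\in\partial\D$ uniformly without the preliminary reduction; the paper's argument is shorter once one is willing to quote Schauder--Tychonoff. Your closing checks (injectivity of $T'$, availability of $v\in E'_q$ linearly independent from $u$) are fine; note that Proposition~\ref{necessary_wf} itself does not actually require univalence of $\varphi$, so that remark is only for reassurance.
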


\begin{proof}
 Since $T$ is  weakly supercyclic if and only if $\alpha T$ is so for any $\alpha\in\partial \D,$ we only need to show $1\notin \sigma_p(T)$ and $1\notin \sigma_p(T')$.  Let $U=\{e\in E: \ q(e)\leq 1\}$ and consider  $K:=(U^\circ,\omega^*)$, which is a compact space by the Alaoglu Bourbaki theorem.  Notice that
 $U^{\circ}:=\{u\in E': |u(e)|\leq 1 \mbox{ for all } e\in U\}=\{u\in E': |u(e)|\leq q(e)\mbox{ for all }e\in E\}$ and from the hypothesis $q\circ T\leq q$ it follows that $T'(U^\circ)\subseteq U^{\circ}$.  There is a continuous injection $i:\ (E,\omega) \hookrightarrow (C(K),\tau_p)$. Under identification of $E$ with the corresponding subspace of $(C(K),\tau_p)$ the
operator $T$ is the composition operator $C_{\varphi}$ where $\varphi=T'$.

% If we consider in the subspace $i(E)\subseteq (C(K),\tau_p)$ the composition operator $C_{\varphi}:i(E)\to i(E)$, where $\varphi=T'$ we get
%
%$$C_\varphi(i(e))(u)=i(e)(\varphi(u))=\langle e, T'(u)\rangle=
%\langle T(e), u\rangle= i(T(e))(u) $$
%
%\noindent for each $u\in K$, i.e $C_\varphi(i(e))=i(T(e))$. Reiterating the argument we get  $C_{\varphi_n}(i(e))=i(T^n(e))$. Hence the supercyclicity of $C_\varphi$ folows from the weak supercyclicity together with the continuity of $i$.

 The assertion  $1\notin \sigma_p(T')$, which is equivalent to saying that $\varphi$ does not have any fixed point, follows now from Theorem \ref{noweakcases} (iv). Let now see that $1\notin \sigma_p(T).$ Assume that  $T(e_0)=e_0$ for some $e_0\in U,$ $q(e_0)=1,$ and let $F(e_0):=\{u\in U^{\circ}:\  u(e_0)=1\}$. Observe that $T'(F(e_0))\subseteq F(e_0)$ and from the Hahn Banach theorem, $F(e_0)$ is nonempty. Moreover, it is a $\omega^*$-compact convex set. From Schauder-Tychonoff's fixed point theorem \cite[Page 230]{Kothe}, we get a fixed point for $T'$ and we conclude.
\end{proof}

From Theorem \ref{lcs} and the fact that an operator $T$ is weakly supercyclic if and only if $aT$ is so for each $a\in \C,$ it  easily follows the next consequence:

\begin{corollary}
\label{finalcorollary}
Let $X$ be a Banach space. If $T:X\to X$ is a weakly supercyclic operator, then  $\sigma_p(T) \subseteq B(0, \|T\|)$ and $\sigma_p(T^*) \subseteq B(0, \|T\|),$ where  $B(0, \|T\|)$ stands for the open disc of radius $\|T\|$ centered at zero.\end{corollary}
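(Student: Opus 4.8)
The plan is to reduce the statement to Theorem \ref{lcs} by a normalisation of $T$. We may clearly assume $\|T\|>0$. Put $r:=\|T\|$ and $S:=r^{-1}T$, so that $\|S\|\le 1$; equivalently, $q\circ S\le q$ for the continuous norm $q=\|\cdot\|$ on the Banach space $X$. First I would record that $S$ is weakly supercyclic: for any nonzero scalar $a$ one has $\text{Orb}(aT,\text{span}\{f\})=\text{Orb}(T,\text{span}\{f\})$, because for every $n$ the set $\{\lambda a^n:\lambda\in\C\}$ is all of $\C$; hence $f$ is a weakly supercyclic vector for $S=r^{-1}T$ exactly when it is one for $T$. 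Theorem \ref{lcs} then applies to $S$ and gives $\sigma_p(S)\cap\partial\D=\emptyset$ and $\sigma_p(S^*)\cap\partial\D=\emptyset$.

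Next I would translate this back to $T$. Since $S=r^{-1}T$ we have $\sigma_p(S)=r^{-1}\sigma_p(T)$, and since the Banach-space adjoint is linear and $\|T^*\|=\|T\|=r$, also $S^*=r^{-1}T^*$ and $\sigma_p(S^*)=r^{-1}\sigma_p(T^*)$. Therefore $\sigma_p(T)$ contains no complex number of modulus exactly $r$, and the same holds for $\sigma_p(T^*)$. On the other hand, if $\lambda$ is an eigenvalue of $T$ with eigenvector $x\neq 0$, then $|\lambda|\,\|x\|=\|Tx\|\le r\,\|x\|$, so $|\lambda|\le r$; likewise every eigenvalue of $T^*$ has modulus at most $\|T^*\|=r$. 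Combining these two observations yields $|\lambda|<r$ in both cases, that is, $\sigma_p(T)\subseteq B(0,\|T\|)$ and $\sigma_p(T^*)\subseteq B(0,\|T\|)$, as claimed.

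There is no serious obstacle in this argument: all the content sits in Theorem \ref{lcs}, and what remains is the elementary observation that weak supercyclicity is invariant under nonzero rescaling together with the bookkeeping that the unit circle excluded for $S$ corresponds, after undoing the scaling, precisely to the circle $\{|\lambda|=\|T\|\}$ for $T$. That is exactly the gap between the a priori inclusion $\sigma_p(T)\subseteq\overline{B(0,\|T\|)}$ and the desired open-ball inclusion, so closing it finishes the proof.
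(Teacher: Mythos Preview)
Your argument is correct and is precisely the approach the paper takes: rescale $T$ by a scalar of modulus $1/\|T\|$, use that weak supercyclicity is preserved under nonzero scalar multiples, and apply Theorem~\ref{lcs} to the rescaled operator to exclude eigenvalues on the circle $\{|\lambda|=\|T\|\}$; the trivial bound $|\lambda|\le\|T\|$ then forces the strict inequality. The paper's proof is stated in one line, but the content is exactly what you wrote out.
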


We remark that the assertion for $T^*$ in the above  corollary can also be obtained from \cite[Proposition 1.26]{BayartMatheron}. In fact, if $\alpha \in \sigma_p(T^*)$ then from \cite[Proposition 1.26]{BayartMatheron} it follows that $\sigma_p(T^*)=\{\alpha\}$ and $(1/\alpha)T$ restricted to an invariant closed hyperplane $X_0$ of $X$ is weakly hypercyclic. Hence, $|\alpha|<\|T|_{X_0}\|\leq \|T\|=\|T^*\|$.
For the special case of operators of the form $\lambda I\oplus T:\C \oplus X\to \C\oplus X$, weak supercyclicity implies weak hypercyclicity of $(1/\lambda)T$ in $X$ by  \cite[Theorem 2.2]{Sanders 2004}, and hence  also the inequality $|\lambda|<\|T\|\leq \|\lambda I\oplus T\|$.

We can now give a corollary related to \cite[Theorem 3.1]{Bes2014}. Given a simply connected domain $U,$ the operator $C_\varphi$ is hypercyclic on $H(U)$ if and only if it is weakly supercyclic. Moreover, it is equivalent to the absence of fixed points  for $\varphi$  and its  injectivity. In case $U=\D$ this is equivalent by the Denjoy-Wolff theorem to $\varphi$ being {\em strongly runaway}, that is, for each $K\subset \D$ compact there is $n_0$ such that $\varphi^n(K)\cap K=\emptyset$ for each $n\geq n_0$.  This equivalence is extended by Kalmes \cite[Theorem 23]{Kalmes2017} to composition operators on certain sheafs  $C^\infty_P(X)$ with $X\subseteq \R^d$ open and homeomorphic to $\R^d$.  Our next result involves hyperbolic domains. For the definition of a hyperbolic domain $U\subseteq \C$ we refer to  \cite{Milnor}, where it is given in a more general context. This notion includes domains which are not simply connected, as the annulus $A_r:=\{z\in \C: r<|z|<1\}$ or the punctured disc $\D\setminus\{0\}$.

\begin{corollary}
\label{runaway}
Let $U\subseteq \C$ be a hyperbolic domain and let $\varphi:U\to U$ be holomorphic. If $C_\varphi: H(U)\to H(U)$ is weakly supercyclic  then $\varphi$ is injective and  strongly runaway.
 \end{corollary}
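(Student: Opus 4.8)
The plan is to deduce both conclusions --- injectivity of $\varphi$ and the strong runaway property --- from the results of Section~\ref{sec1}, the second one by contradiction. Injectivity is immediate: equipped with the compact-open topology, $H(U)$ is an infinite dimensional Fr\'echet space continuously included in $(C(U),\tau_p)$, and the point evaluations $\{\delta_z:z\in U\}$ are linearly independent in $H(U)'$ (given finitely many distinct points of a domain one builds a holomorphic function realizing prescribed values on them). Since each $\delta_z$ is weakly continuous, the weak topology $\sigma(H(U),H(U)')$ is finer than $\tau_p$, so any weakly supercyclic vector of $C_\varphi$ is already $\tau_p$-supercyclic, and Proposition~\ref{necessaryCK} applied with $w\equiv 1$ yields that $\varphi$ is univalent. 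From now on I therefore assume $C_\varphi$ is $\tau_p$-supercyclic, I fix a $\tau_p$-supercyclic vector $f$ (so $f\not\equiv 0$ and its zero set $Z(f)$ is discrete in $U$), and I suppose, aiming at a contradiction, that $\varphi$ is \emph{not} strongly runaway; equivalently (pass between the two formulations by taking unions of compacts), the iterates of $\varphi$ do not eventually leave every compact subset of $U$.

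Under this assumption the classical iteration theory of holomorphic self-maps of hyperbolic Riemann surfaces (see, e.g., \cite{Milnor}; for $U=\D$ this is the Denjoy--Wolff theorem used in Theorem~\ref{disk}) places $\varphi$ in exactly one of three situations: (a) $\varphi$ has a fixed point in $U$; (b) $\varphi$ is a conformal automorphism of $U$ of finite order $q\ge 2$ with no fixed point; (c) up to a conformal equivalence of $U$, $U$ is an annulus $A_r=\{r<|z|<1\}$ or the punctured disc $\D\setminus\{0\}$ and $\varphi(z)=e^{i\theta}z$ with $\theta/(2\pi)$ irrational. (A non-automorphism that does not escape has an attracting fixed point by Heins' theorem; an automorphism that does not escape has relatively compact powers in $\mathrm{Aut}(U)$, and the structure of the compact subgroups of the automorphism group of a hyperbolic plane domain then forces one of (a), (b), (c).)

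I then rule out each case. In case (a), with $z_0\in U$ a fixed point, $z_0$ is an accumulation point of $U$, and for $j$ large the closed hyperbolic balls $V_j:=\{z\in U:d_U(z,z_0)\le 1/j\}$ ($d_U$ the hyperbolic distance) are connected compact neighbourhoods of $z_0$ forming a fundamental system with $\varphi(V_j)\subseteq V_j$, since $d_U(\varphi(z),z_0)=d_U(\varphi(z),\varphi(z_0))\le d_U(z,z_0)$ by Schwarz--Pick; thus $\varphi$ has stable orbits around $z_0$, and Theorem~\ref{noweakcases}(v) contradicts the $\tau_p$-supercyclicity of $C_\varphi$. In case (b), pick $z_1\in U$ with $\varphi(z_1)\neq z_1$ (such a point exists because the finite-order automorphism $\varphi$ is not the identity, so its fixed-point set is a proper analytic subset of $U$) and put $z_2:=\varphi(z_1)$; then $\varphi^n(z_1)$ and $\varphi^n(z_2)$ range over finite sets, so $\{\,f(\varphi^n(z_1))/f(\varphi^n(z_2)):n\in\N,\ f(\varphi^n(z_2))\neq 0\,\}$ is finite, contradicting Proposition~\ref{necessary_wf} (applied with $w\equiv 1$). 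In case (c) we may conjugate $\varphi$ by the conformal equivalence --- this induces a linear homeomorphism between the corresponding spaces of holomorphic functions carrying $C_\varphi$ to a composition operator, hence preserves $\tau_p$-supercyclicity --- and assume $\varphi(z)=e^{i\theta}z$ on $U\in\{A_r,\D\setminus\{0\}\}$; now every orbit is dense in the (compact) circle through its base point, and since $Z(f)$ is discrete only countably many of these circles meet it, so we may choose $z_1,z_2$ on two distinct circles disjoint from $Z(f)$; on the compact union of those circles $|f|$ is bounded above and away from $0$, so $\{\,f(\varphi^n(z_1))/f(\varphi^n(z_2)):n\in\N\,\}$ is bounded and cannot be dense in $\C$, again contradicting Proposition~\ref{necessary_wf}. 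This disposes of all cases, so $\varphi$ is strongly runaway.

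The step I expect to be the main obstacle is the structural trichotomy (a)--(c): it rests on the generalized Denjoy--Wolff/Heins classification of the dynamics of an \emph{arbitrary} --- not necessarily simply connected --- holomorphic self-map of a hyperbolic plane domain, and one must in particular check that the only fixed-point-free, non-escaping such maps are the automorphisms of (b) and (c), i.e.\ that no orbit can wander forever without either recurring or escaping. Once that classification is in place, each of the three cases closes with a one-line appeal to Theorem~\ref{noweakcases}(v) or to Proposition~\ref{necessary_wf}, so the rest of the argument is routine.
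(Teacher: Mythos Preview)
Your proof is correct, but it follows a different path from the paper's. The paper also reduces to Milnor's classification of holomorphic self-maps of hyperbolic surfaces, but it extracts from \cite[Theorem~5.2]{Milnor} a cleaner dichotomy: if $\varphi$ is not strongly runaway then either some iterate $\varphi^{n_0}$ is the identity, or there is a compact set $K\subseteq U$ containing an accumulation point with $\varphi(K)\subseteq K$. The first alternative is dispatched trivially; for the second, the paper invokes Theorem~\ref{lcs} with the continuous norm $p_K(f)=\sup_{z\in K}|f(z)|$, which satisfies $p_K\circ C_\varphi\le p_K$, and observes that the constant function $1$ is a nonzero fixed point of $C_\varphi$, contradicting the conclusion of Theorem~\ref{lcs}.

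Your route is more explicit: you unfold Milnor's classification into the three concrete dynamical types (fixed point, finite-order automorphism without fixed point, irrational rotation on an annulus or punctured disc) and handle each one directly with Theorem~\ref{noweakcases}(v) or Proposition~\ref{necessary_wf}. This costs you a longer case analysis and a slightly heavier appeal to the structure of $\mathrm{Aut}(U)$, but it buys something the paper's argument does not: since Theorem~\ref{noweakcases}(v) and Proposition~\ref{necessary_wf} are stated for $\tau_p$-supercyclicity, your argument actually proves the stronger conclusion that $\varphi$ must be injective and strongly runaway already under the weaker hypothesis that $C_\varphi$ is $\tau_p$-supercyclic on $H(U)$. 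The paper's use of Theorem~\ref{lcs} genuinely needs weak supercyclicity. Your acknowledged ``main obstacle'' (the trichotomy) is precisely the content of \cite[Theorem~5.2]{Milnor}, so it is on the same footing as the paper's citation and is not a gap.
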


\begin{proof}
The injectivity  of the symbol $\varphi$ was obtained by B\`es in \cite[Proposition 2.1]{Bes2014}. Suppose that $\varphi$ is not strongly runaway. By \cite[Theorem 5.2]{Milnor}, either there exists $n_0$ such that $\varphi^{n_0}=Id_U$, in which case $C_\varphi$ is certainly not weakly supercyclic, or there exists a compact subset $K$ containing an accumulation point such that $\varphi(K)\subseteq K$, and then $C_\varphi$ is not weakly supercyclic  applying Theorem \ref{lcs}, since the constant functions are fixed points of $C_{\varphi}$ and $p_K(f)=\{\sup|f(z)|:\ z\in K\}$, $f\in H(U),$ is a continuous norm in $H(U)$.
 \end{proof}

We finish this section with an application of our results strongly connected with the open question proposed in  \cite[Problem 3]{Bes2014}. 
	 For $U$ being the punctured disc or the punctured plane, we succed in showing that every composition operator on $H(U)$ is never weakly supercyclic.

\begin{theorem}
The spaces $H(\D\setminus\{0\}))$ and $H(\C\setminus\{0\})$ admit no weakly supercyclic composition operators.
\end{theorem}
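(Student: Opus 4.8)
The plan is to reduce to the classification of holomorphic self-maps of the punctured disc and punctured plane, and in each case either exhibit a fixed point with a suitable invariant compact neighbourhood (so that Theorem~\ref{lcs} or Theorem~\ref{noweakcases} applies) or observe that $\varphi$ has a periodic point of finite order or is an automorphism of finite order, which again precludes weak supercyclicity. First I would handle $U=\C\setminus\{0\}$. A holomorphic self-map $\varphi:\C\setminus\{0\}\to\C\setminus\{0\}$ extends, via Riemann's removable singularity / Casorati--Weierstrass analysis at $0$ and $\infty$, to a self-map of the sphere fixing $\{0,\infty\}$ as a set; the only such maps without an essential singularity are $z\mapsto az^n$ and $z\mapsto a/z^n$ with $a\neq0$ and $n\in\Z_{>0}$. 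For $\varphi(z)=az^n$ with $n\ge 2$, the orbit of any point inside a small circle $|z|=r$ with $|a|r^{n-1}<1$ converges to $0\notin U$ but stays in a compact annulus $K=\{\eps\le|z|\le r\}$ with $\varphi(K)\subseteq K$ once $r$ is small; since constants are fixed points of $C_\varphi$ and $p_K$ is a continuous norm on $H(U)$, Theorem~\ref{lcs} gives non-weak-supercyclicity. For $n=1$, $\varphi(z)=az$ either has $|a|\ne1$, giving the fixed point $0$ or $\infty$ and again an invariant compact annulus around a circle, or $|a|=1$: if $a$ is a root of unity then some iterate is the identity, and if not, every circle $|z|=r$ is invariant and compact, so Theorem~\ref{lcs} applies with the norm $p_{\{|z|=r\}}$. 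The maps $z\mapsto a/z^n$ with $n\ge 2$ behave like the $z\mapsto az^n$ case after one iteration ($\varphi^2(z)=a^{1-n}z^{n^2}$ or similar, degree $n^2\ge 4$), and $z\mapsto a/z$ satisfies $\varphi^2=\mathrm{Id}$. So every case for $\C\setminus\{0\}$ is covered.

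Next I would do $U=\D\setminus\{0\}$. A holomorphic $\varphi:\D\setminus\{0\}\to\D\setminus\{0\}$ extends across the puncture (the singularity at $0$ is removable or a pole, but a pole would force values outside $\D$, so it is removable) to a holomorphic $\tilde\varphi:\D\to\overline\D$; by the maximum principle either $\tilde\varphi(\D)\subseteq\D$ and $\tilde\varphi:\D\to\D$ is a holomorphic self-map of the disc omitting the value $\varphi(0)$ only if $\varphi(0)\ne0$, or $\tilde\varphi$ is constant of modulus $1$ (excluded, not a self-map of the punctured disc) or $\tilde\varphi(0)\in\partial\D$ forcing $\tilde\varphi$ constant again. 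If $\tilde\varphi(0)=0$, then $\tilde\varphi$ is a disc self-map fixing $0$, and by Schwarz either $\tilde\varphi$ is a rotation (a root of unity case gives a finite-order iterate; an irrational rotation leaves every circle $|z|=r$ invariant, and $p_{\{|z|=r\}}$ is a continuous norm on $H(\D\setminus\{0\})$ with the constants fixed, so Theorem~\ref{lcs} applies), or $|\tilde\varphi'(0)|<1$ and $0$ is an attracting fixed point with a fundamental system of invariant compact discs $\{|z|\le r_j\}$ minus nothing—wait, $0\notin U$, so instead I take small invariant compact annuli: for $r$ small, $|\tilde\varphi(z)|\le c|z|$ with $c<1$ near $0$, so $\{\eps\le|z|\le r\}$ is $\varphi$-invariant for suitable $\eps<r$, a compact set containing accumulation points, and Theorem~\ref{lcs} applies via $p_K$. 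If $\tilde\varphi(0)=a\ne0$ with $|a|<1$, then $\tilde\varphi:\D\to\D$ has (by Denjoy--Wolff applied to the disc) a Denjoy--Wolff point $\zeta\in\overline\D$; if $\zeta\in\D$ and $\zeta\ne0$ then $\zeta\in U$ is a fixed point with stable orbits, so Theorem~\ref{noweakcases}(ii) or (v) applies, and if $\zeta=0$ that is the previous case; if $\zeta\in\partial\D$ then by Denjoy--Wolff $\varphi^n\to\zeta$ uniformly on compacta and $\varphi$ is strongly runaway on $\D$, but—and this is the crucial point—$0$ is still omitted from all $\varphi^n(\D)$; however $\varphi$ need not be runaway on $\D\setminus\{0\}$ because... actually it is, since $K\subseteq\D\setminus\{0\}$ compact is compact in $\D$. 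In that subcase one must argue differently: $C_\varphi$ on $H(\D)$ would be hypercyclic, but restriction/extension between $H(\D)$ and $H(\D\setminus\{0\})$ via the Laurent decomposition shows $H(\D\setminus\{0\})\cong H(\D)\oplus H(\C\setminus\overline\D)$ only if $\varphi$ extends compatibly—it does not in general, so one uses instead that any $f\in H(\D\setminus\{0\})$ with an essential singularity at $0$ cannot lie in the closure of $\{\lambda C_\varphi^n g\}$ for $g$ holomorphic at $0$...

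This last subcase—$\varphi$ extending to a fixed-point-free disc self-map with boundary Denjoy--Wolff point—is the main obstacle, and I would resolve it by a direct argument at the puncture: take a supercyclic vector $g$; since $\varphi^n\to\zeta\in\partial\D$ uniformly on compacta, for any fixed small circle $|z|=\rho$ and large $n$, $\varphi^n(\{|z|=\rho\})$ lies near $\zeta$, so $C_\varphi^n g$ and hence $\lambda C_\varphi^n g$ on $|z|=\rho$ is controlled by the values of $g$ near $\zeta$; this forces the ``$0/0$'' obstruction of Proposition~\ref{necessary_wf} after choosing two points $z_1,z_2$ on $|z|=\rho$ whose $\varphi$-orbits both converge to $\zeta$—then $g(\varphi^n(z_1))/g(\varphi^n(z_2))\to1$ since both tend to $g(\zeta^-)$ (boundary value need not exist, but the \emph{ratio} is controlled because $\varphi^n(z_1)$ and $\varphi^n(z_2)$ are hyperbolically close by Schwarz--Pick), while $\prod w\equiv1$ in the unweighted case, contradicting density of the quotient set in $\C$. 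Thus in every case $C_\varphi$ fails to be weakly supercyclic, and combining the two domains completes the proof; I expect the bookkeeping of the Denjoy--Wolff dichotomy and the verification that hyperbolic distances between the two orbit sequences stay bounded to be the only genuinely delicate points.
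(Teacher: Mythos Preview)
Your approach has a genuine gap at the crucial step. When $\varphi(z)=az$ with $0<|a|<1$ on $\C\setminus\{0\}$ (and likewise in the punctured-disc case when $\tilde\varphi(0)=0$ with $|\tilde\varphi'(0)|<1$), you claim the existence of a $\varphi$-invariant compact annulus $K=\{\eps\le|z|\le r\}$. This is false: $\varphi$ sends the inner boundary $\{|z|=\eps\}$ to $\{|z|=|a|\eps\}$, which lies strictly inside the hole, so $\varphi(K)\not\subseteq K$. In fact there is no nonempty compact $\varphi$-invariant subset of $U$ at all in this situation, because any point of such a set would have its full forward orbit in $K$, yet that orbit converges to the puncture $0\notin U$, contradicting compactness. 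Hence neither Theorem~\ref{lcs} nor Theorem~\ref{noweakcases}(v) can be invoked here. The same objection destroys your treatment of $az^n$ with $n\ge2$; and your classification of holomorphic self-maps of $\C\setminus\{0\}$ omits maps with essential singularities such as $e^z$ or $e^{1/z}$---this is easily repaired by first invoking injectivity (Proposition~\ref{necessaryCK}), which forces $\varphi(z)=az$ or $\varphi(z)=a/z$, but as written it is incomplete.

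The paper handles the attracting-to-puncture case by an entirely different mechanism. For $\hat\varphi(0)=0$, after using Corollary~\ref{runaway} (injectivity and the strongly runaway property) together with Koenig's linearization to reduce to $g_a(z)=az$ with $0<|a|<1$ on a punctured neighbourhood of $0$, one exploits the Laurent-coefficient functionals $P_k(f)=\frac{1}{2\pi i}\int_{C_r}z^{-k-1}f(z)\,dz$. These satisfy $P_k(C_{g_a}f)=a^kP_k(f)$, so a supercyclic vector $f$ must have $P_k(f)\neq0$ for every $k\in\Z$; normalizing $P_0(f)=1$, any net with $\lambda_i C_{g_a}^{n_i}f\to 1$ forces $\lambda_i\to1$ via $P_0$, while $|P_{-1}(\lambda_i C_{g_a}^{n_i}f)|=|\lambda_i|\,|a|^{-n_i}|P_{-1}(f)|\to\infty$, contradicting $P_{-1}(1)=0$. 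This negative-index Laurent functional is the missing idea your annulus argument cannot supply.

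Finally, in the subcase $\tilde\varphi(0)\neq0$ you work far too hard. Since $\varphi$ is then holomorphic across $0$ with value in $\D\setminus\{0\}$, every composition $f\circ\varphi$ extends holomorphically to $0$; thus $C_\varphi\bigl(H(\D\setminus\{0\})\bigr)\subseteq H(\D)$, a proper closed subspace of $H(\D\setminus\{0\})$, so the projective orbit of any $f$ is trapped in $\C f\cup H(\D)$ and cannot be dense. No Denjoy--Wolff analysis, boundary behaviour, or Schwarz--Pick estimate is needed for this subcase.
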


\begin{proof}
We prove first the case of the punctured disc. If $\varphi$ is a self-map on $\D\setminus\{0\}$ then  $\varphi$ admits a holomorphic extension $\hat{\varphi}:\D\to \D$. If $\hat{\varphi}(0)\neq 0$, then $f\circ \varphi$ admits a holomorphic extension to $\{0\}$ for each $f\in H(\D\setminus\{0\})$.    Since $H(\D)$ is closed in $H(\D\setminus\{0\})$, we get that  $C_{\varphi}$ is not  weakly supercyclic.

Assume now that $\widehat{\varphi}(0)=0$ and that $C_\varphi$ is weakly supercyclic. By Corollary \ref{runaway}, $\varphi$ is strongly runaway, and thus, $\widehat{\varphi}$ is not an elliptic automorphism. Also by Corollary \ref{runaway} we deduce that $\widehat{\varphi}$ is  injective, and so, $\widehat{\varphi}'(z)\neq 0$ for every $z\in \D.$ Then, by Koenig's theorem \cite[Theorem 8.2]{Milnor} there is a subdomain $U$ of $\D$ such that $\widehat{\varphi}(U)\subseteq U$ and there exists $F:U\to\D$ a conformal mapping and $a\in \C,$ $0<|a|<1$ such that 
\begin{equation}
\label{konigs}
\widehat{\varphi} |_U=F^{-1}\circ g_a\circ F,
\end{equation}
\noindent where $g_a(z)=az$ for $z\in\D$. Since the continuous  restriction $H(\D\setminus\{0\})\to H(U\setminus\{0\})$, $f\mapsto f|_U$, has dense range (the functions $f_k(z)=z^{k}$,  $k\in \Z,$ span a dense subspace in both spaces by Laurent's theorem) from (\ref{konigs}) we only need to show that $C_{g_a}$ is not weakly supercyclic on $H(U\setminus\{0\})$. 
Observe that the projections $(P_k)_{k\in\Z}$ on the Laurent development are continuous functionals, and 
$$P_k(f\circ g_a)=\frac{1}{2\pi i}\int_{C_r} z^{-k-1}f(az)dz=a^{k}P_k(f), \ 0<r<1,$$

\noindent for each $k\in\Z$. Hence,  if $f$ is a weakly supercyclic vector then $P_k(f)\neq 0$ for each $k\in\Z$. We assume without loss of generality $P_0(f)=1$. If $\lim_i \lambda_i (f\circ (g_a)^{n_i})=1$,  $n_i\geq 1$ for all i,

\noindent  we get 

$$\lim_i \lambda_i=\lim_i P_0 (\lambda_i(f\circ (g_a)^{n_i}))=P_0(1)=1.$$ 

\noindent   But we also have 

$$0=|P_{-1}(1)|=\lim_i |P_{-1}(\lambda_i (f\circ (g_a)^{n_i}))|\geq \lim_i |\lambda_i| |P_{-1}(f)| \liminf_i |a^{-n_i}|\geq  \left|\frac{P_{-1}(f)}{a}\right|,$$

\noindent a contradiction.

Assume now that $C_\varphi:H(\C\setminus\{0\})\to H(\C\setminus\{0\})$ is weakly supercyclic. Then $\varphi$ is an injective holomorphic self map on $\C\setminus\{0\}$. An injective self map $\varphi$ on $\C\setminus\{0\}$ has the form $\varphi(z)=az$ or $\varphi(z)=\frac a z$,  with $a\in \C\setminus\{0\}$  (see \cite[Theorem 25.3.1]{garling}). The case  $\varphi(z)=\frac a z$ follows immediatly since $C_\varphi^2=Id.$ We then consider  the case $\varphi(z)=az$. If $|a|<1$ we proceed as in the punctured disc. If $|a|=1$ we have a rotation and the result follows directly from Corollary \ref{runaway}. If $|a|>1$ then we proceed as in the punctured disc but getting a contradiction with the projection $P_1(f)$.
\end{proof}

\section{Remarks on the dynamics of $\W$ with respect to the pointwise topology}\label{sec2}

In this section, we focus on the following problem: given a topological Haussdorff space $X$, can $\W:C(X)\rightarrow C(X)$ be pointwise supercyclic?
We answer this question in the positive for certain sequence spaces. Moreover, we study the connections between the concepts of weak supercyclicity, pointwise supercyclicity and cyclicity. It is clear that weak supercyclicity implies $\tau_p$-supercyclicity and cyclicity, since cyclicity in the weak topology is equivalent to cyclicity with respect to the norm topology. However, we show that there exist $\tau_p$-supercyclic operators which are not weakly supercyclic, and that the concepts of $\tau_p$-supercyclicity and cyclicity are not related. We illustrate these concepts and connections with several examples.

Given $X$ a locally compact space, we denote by $\hat{X}:=X\cup \{\infty\}$ the \emph{Alexandroff compactification} of $X.$  Any homeomorphism $\varphi: X\rightarrow X$ extends uniquely to $\hat{\varphi}:\hat{X}\rightarrow \hat{X}, $  $\hat{\varphi}(\infty):=\infty$ and $\hat{\varphi}_{|X}=\varphi.$
%Consider the spaces  $C(\hat{X})$ and $C_0(X)=\{f\in C(\hat{X}): f(\infty)=0\}.$

For $X=\Z,$  $C(\Z)$ is the set of bilateral sequences $\{f=(f_n)_{n=-\infty}^{\infty}: f_n\in \C, \ n\in \Z \}.$  Observe that $C(\hat{\Z})$
%and $C_0(\Z)$ are the Banach spaces
is the Banach space
$$c_{\infty}(\Z)=\{f=(f_n)_{n=-\infty}^{\infty},  \ f_n\in \C, \ n\in \Z: \ f_n \ \mbox{is}\ \mbox{convergent}\ \mbox{as}\ |n|\rightarrow \infty\},$$
%and
%$$c_0(\Z)=\{f=(f_n)_{n=-\infty}^{\infty}: \ \lim_{|n|\rightarrow \infty}f_n=0\},$$ respectively,
 endowed with the supremum norm $\|f\|_{\infty}=\sup_{n\in\Z}|f_n|.$ Indeed, a bilateral sequence is a continuous map  $f: \Z\rightarrow \C,$ and it converges if and only if this map has an extension to a continuous map $\hat{f}:\hat{\Z}\rightarrow \C,$  where the basic-open neighbourhoods of $\infty$ are cofinite. The value at infinity is the limit of the sequence.

Consider now a particular case of composition operators. For each $j\in \Z,$ let $e_j$ denote the bilateral sequence $ (..., 0, 1, 0,... )$   with  the  $1$  in  the  $j$-th  position and consider  the  bilateral backward shift  $B: c_{\infty}(\Z) \rightarrow c_{\infty}(\Z)$  defined  by  $Be_j = e_{j-1}$ for  each  $j \in \Z.$ $B$ is the composition operator $C_{\varphi}:c_{\infty}(\Z) \rightarrow c_{\infty}(\Z)$ associated to the symbol $\varphi: \Z\rightarrow \Z, \ j\mapsto j+1.$     It is  well known that $B: c_{0}(\Z) \rightarrow c_{0}(\Z)$  is a weakly supercyclic isometry \cite[Theorem 2]{Sanders 2005}, where $$c_0(\Z)=\{f=(f_n)_{n=-\infty}^{\infty}\in c_{\infty}(\Z):  \ \lim_{|n|\rightarrow \infty}f_n=0\}.$$

 However, in the next example we show that  $B: c_{\infty}(\Z)\rightarrow c_{\infty}(\Z)$ is not weakly supercyclic, but it is $\tau_p$-supercyclic. As a consequence, we get that these weak forms of supercyclicity are not equivalent for composition operators.

\begin{example}\label{weakandpointwisedifferent}
 The  bilateral backward shift $B: c_{\infty}(\Z)\rightarrow c_{\infty}(\Z)$ is not weakly supercyclic but it is $\tau_p$-supercyclic. Indeed, as $B$  is weakly supercyclic on $c_0(\Z)$  \cite[Theorem 2]{Sanders 2005}, it is also $\tau_p$-supercyclic. Since $c_0(\Z)$ is dense in $c_{\infty}(\Z)$ with respect to the pointwise convergence topology on $\Z,$ we get that $B$ is  $\tau_p$-supercyclic on $c_{\infty}(\Z).$  However, $B$ is not weakly supercyclic on $c_{\infty}(\Z)=C(\hat{\Z})$  by Theorem \ref{CXcompact}, since $\hat{\Z}$ is compact and $B$ is a composition operator associated to the symbol $\varphi: \Z\rightarrow \Z, \ j\mapsto j+1.$

\end{example}

In the next proposition we show that $\tau_p$-supercyclicity does not imply cyclicity for weighted composition operators. Moreover, we prove that cyclicity together with $\tau_p$-supercyclicity do not imply weak supercyclicity. First, consider the spaces $$\ell_{\infty}=\{f=(f_n)_{n\in \N}, \ f_n\in \C, \ n\in \N:  \sup_{n\in\N}|f_n|<\infty\},$$
$$c_{\infty}=\{f=(f_n)_{n\in \N}\in \ell_{\infty} :\ \ \mbox{is}\ \mbox{convergent}\ \mbox{as}\ n\rightarrow \infty\}$$ and
$$c_0=\{f=(f_n)_{n\in \N}\in c_{\infty}: \ \lim_{n\rightarrow \infty}f_n=0\}.$$

Observe that  $c_{\infty}=C(\hat{\N}),$ where $\hat{\N}$ is the Alexandroff compactification of $\N$  and $c_0=\text{Ker} (\delta_{\infty}),$    $\delta_{\infty}: C(\hat{\N})\rightarrow \C, f\mapsto f(\infty).$ Thus, $c_0$ has codimension 1 in  $c_{\infty}.$

The unilateral weighted backward shift $B_w:  \ell_{\infty}\rightarrow \ell_{\infty},$ $w=(w_n)_{n\in\N},$ $w_n>0,$  $\lim_nw_n\rightarrow 0$ defined by $B_w e_j=w_je_{j-1},$ $j=1,2,\dots,$ $B_w e_0=e_0,$ where $e_0$ is the zero sequence,
%Since $\lim_nw_n\rightarrow 0$ we get that $B_w(c_{\infty})\subseteq c_0$ and that
can be viewed as a weighted composition operator with symbol  $\varphi: \N\rightarrow \N, \ j\mapsto j+1,$ and weight $w=(w_n)_{n\in\N}.$  By \cite{HildenWallen} we know that $B_w:  c_{0}\rightarrow c_{0}$  is supercyclic.

\begin{lemma}\label{codimension}
	Consider a Banach space  $G$ and a normed closed subspace $G_0\subseteq  G,$ dense with respect to the pointwise convergence topology $\tau_p.$ If the codimension of $G_0$ in $G$ is greater than 1 and $T:G_0\rightarrow G_0$ admits a continuous extension $\hat{T}:G\rightarrow G$ such that $ \hat{T}(G)\subseteq G_0,$ then $\hat{T}$ is not cyclic.
\end{lemma}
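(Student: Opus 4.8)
The plan is to show that a cyclic vector for $\hat{T}$ cannot exist by producing a nonzero continuous linear functional annihilating $\mathrm{span}\{\hat{T}^n f : n \geq 0\}$ for every $f \in G$, contradicting density of that span. First I would observe that since $\hat{T}(G) \subseteq G_0$, we have $\hat{T}^n f \in G_0$ for every $n \geq 1$ and every $f \in G$. Hence
$$
\mathrm{span}\{\hat{T}^n f : n = 0,1,2,\dots\} \subseteq \mathrm{span}\{f\} + G_0.
$$
Since the codimension of $G_0$ in $G$ is at least $2$, the subspace $\mathrm{span}\{f\} + G_0$ has codimension at least $1$ in $G$ — it is a proper closed subspace (closed because $G_0$ is closed and we are adding a one-dimensional subspace, so the sum is still closed). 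Therefore there is a nonzero $u \in G^*$ vanishing on $\mathrm{span}\{f\} + G_0$, hence on the whole orbit span of $f$. This shows the norm closure of that orbit span is a proper subspace of $G$, so $f$ is not a cyclic vector for $\hat{T}$ with respect to the norm topology. Since $f \in G$ was arbitrary, $\hat{T}$ is not cyclic.

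**The one subtlety.** The one point that needs a word of care is the claim that $\mathrm{span}\{f\} + G_0$ is closed in $G$: in general the sum of a closed subspace and a finite-dimensional subspace of a normed space is closed, which handles this. Alternatively, and perhaps more cleanly, one need not even invoke closedness: a proper linear subspace of a normed space (whether or not closed) that has finite codimension $\geq 1$ is automatically not dense, since any subspace of codimension $\geq 1$ is contained in the kernel of some nonzero linear functional, and on a finite-codimensional subspace such a functional is automatically continuous (its kernel has finite codimension, hence is closed). Either route gives the required nonzero $u \in G^*$.

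**Why the hypotheses are used.** It is worth noting where each hypothesis enters: the condition $\hat{T}(G) \subseteq G_0$ is what forces all but the zeroth term of any orbit into $G_0$; the codimension being strictly greater than $1$ (not merely $\geq 1$) is exactly what guarantees that even after adjoining the single extra dimension $\mathrm{span}\{f\}$ we still land in a proper subspace; and the density of $G_0$ in $(G,\tau_p)$, while not literally used in the argument above, is the hypothesis that makes the lemma non-vacuous and is what will be exploited in the applications (it is what allows $T$ on $G_0$ to be $\tau_p$-supercyclic while $\hat{T}$ on $G$ fails to be cyclic). I do not anticipate a genuine obstacle here; the only thing to be careful about is stating the "proper finite-codimensional subspace is not dense" step correctly, which is why I would phrase it via the continuous functional argument rather than by an appeal to closedness that requires the finite-dimensional-plus-closed lemma.
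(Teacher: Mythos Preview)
Your proof is correct and follows essentially the same approach as the paper: both arguments observe that $\mathrm{span}\{\hat{T}^n f : n\geq 0\}\subseteq \mathrm{span}\{f\}+G_0$, and then use that this sum is a proper closed subspace of $G$ (since $G_0$ is closed of codimension $\geq 2$) to conclude non-density. The only cosmetic difference is that the paper exhibits a point $g\in G\setminus(\mathrm{span}\{f\}+G_0)$ directly rather than invoking a separating functional, and you are right that the $\tau_p$-density hypothesis plays no role in the proof itself.
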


\begin{proof}
Let $f\in G$ be a cyclic vector of $\hat{T}.$ Since the codimension of $G_0$ in $G$ is greater than $1$, there exists $g\in G\setminus (\text{span}\{f\}\oplus G_0),$ and therefore, $g\notin \overline{\text{span}\{T^nf: f\geq 0\}} \subseteq\text{span}\{f\}\oplus G_0,$ a contradiction.
\end{proof}

\begin{proposition}\label{taup_nocyclic}
Consider the weight $w=(w_n)_n$ such that $w_n>0$ and  $\lim_nw_n\rightarrow 0.$
\begin{itemize}
	\item[(i)] If $f\in \ell_{\infty}\setminus c_{\infty},$ the weighted backward shift $B_w:\text{span}\{f\}\oplus c_{\infty}\rightarrow \text{span}\{f\}\oplus c_{\infty}$ is $\tau_p$-supercyclic but not (weakly super)cyclic.
	\item[(ii)]  If $f$ is a cyclic vector of $B_w: c_0\rightarrow c_0$ and there exists $g\in c_{\infty}$ such that $B_wg=f,$ then $B_w:c_{\infty}\rightarrow c_{\infty}$ is cyclic, $\tau_p$-supercyclic but not weakly supercyclic.
\end{itemize}
\end{proposition}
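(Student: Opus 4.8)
The plan is to view $B_w$, on each of the spaces involved, as a weighted composition operator, and then to combine three facts already at hand: the supercyclicity of $B_w$ on $c_0$ of Hilden and Wallen \cite{HildenWallen}, Lemma~\ref{codimension}, and Theorem~\ref{CXcompact}. Three elementary remarks will be used throughout. (a) Since $w_n\to 0$ and every $h\in\ell_{\infty}$ is bounded, $(B_wh)_j=w_jh_{j+1}\to 0$; thus $B_w(\ell_{\infty})\subseteq c_0$, so $B_w$ restricts to a continuous operator with range inside $c_0$ on any linear subspace of $\ell_{\infty}$ that contains $c_0$. (b) The truncations of a bounded sequence converge to it pointwise, so $c_0$ is $\tau_p$-dense in $\ell_{\infty}$, hence in every subspace lying between $c_0$ and $\ell_{\infty}$. (c) The symbol $j\mapsto j+1$ extends to $\hat{\N}$ fixing $\infty$, and $w$ extends continuously by $\hat w(\infty)=0$; hence on $c_{\infty}=C(\hat{\N})$ the operator $B_w$ is the weighted composition operator $C_{\hat w,\hat\varphi}$, and since $\hat{\N}$ is compact and the constant function $\mathbf 1$ is a nowhere vanishing element of $c_{\infty}$, Theorem~\ref{CXcompact} applies to it.

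For (i), put $G:=\mathrm{span}\{f\}\oplus c_{\infty}$; this is a one-dimensional extension of the closed subspace $c_{\infty}\subseteq\ell_{\infty}$, hence closed in $\ell_{\infty}$ and a Banach space in the sup norm, and by (a) $B_w(G)\subseteq c_0\subseteq G$, so $B_w\colon G\to G$ is well defined and continuous. For $\tau_p$-supercyclicity, choose by \cite{HildenWallen} a supercyclic vector $h_0\in c_0$ for $B_w\colon c_0\to c_0$; the set $\mathrm{Orb}(B_w,\mathrm{span}\{h_0\})$ is norm-dense in $c_0$, hence $\tau_p$-dense in $c_0$, and since it stays in $c_0$ while $c_0$ is $\tau_p$-dense in $G$ by (b), $h_0$ is a $\tau_p$-supercyclic vector for $B_w\colon G\to G$. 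For the stronger conclusion that $B_w$ is not cyclic on $G$ I would apply Lemma~\ref{codimension} with $G_0:=c_0$: it is norm-closed and $\tau_p$-dense in $G$, the operator $B_w\colon c_0\to c_0$ extends to $B_w\colon G\to G$ with range in $c_0$, and since $G=\mathrm{span}\{f\}\oplus\mathrm{span}\{\mathbf 1\}\oplus c_0$ (because $f\notin c_{\infty}$ and $\mathbf 1\notin c_0$) the codimension of $c_0$ in $G$ equals $2>1$; hence $B_w\colon G\to G$ is not cyclic, and a fortiori not (weakly super)cyclic.

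For (ii), (a) gives $B_w(c_{\infty})\subseteq c_0\subseteq c_{\infty}$, so $B_w\colon c_{\infty}\to c_{\infty}$ is well defined and continuous, and by (c) it is a weighted composition operator on $C(\hat{\N})$, whence Theorem~\ref{CXcompact} shows it is not weakly supercyclic. The $\tau_p$-supercyclicity follows exactly as in (i), from the Hilden--Wallen supercyclic vector in $c_0$ together with the $\tau_p$-density of $c_0$ in $c_{\infty}$. For cyclicity, $B_wg=f$ gives $\mathrm{Orb}(B_w,g)=\{g\}\cup\mathrm{Orb}(B_w,f)$, so
$$\overline{\mathrm{span}\,\mathrm{Orb}(B_w,g)}\ \supseteq\ \mathrm{span}\{g\}+\overline{\mathrm{span}\,\mathrm{Orb}(B_w,f)}\ =\ \mathrm{span}\{g\}+c_0,$$
where the equality uses that $f$ is cyclic for $B_w$ on $c_0$ and that $c_0$ is norm-closed in $c_{\infty}$. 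Since $g\in c_{\infty}\setminus c_0$, the right-hand side equals $c_{\infty}$, so $g$ is a cyclic vector for $B_w\colon c_{\infty}\to c_{\infty}$.

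The delicate point, and the one I expect to be the main obstacle, is this last cyclicity claim: the argument needs the preimage $g$ to lie outside $c_0$. Should it happen that $g\in c_0$ — equivalently $f_j/w_j\to 0$, which can genuinely occur — then $\mathrm{Orb}(B_w,g)\subseteq c_0$, $g$ is no longer cyclic for $c_{\infty}$, and one must produce another cyclic vector $v\in c_{\infty}\setminus c_0$ (note that such a $v$ is cyclic for $B_w$ on $c_{\infty}$ precisely when $B_wv$ is cyclic for $B_w$ on $c_0$). The natural candidate is $v=\mathbf 1$, for which $B_w\mathbf 1=(w_j)_j$, and the task becomes to show that $\mathrm{span}\{(\prod_{i=0}^{k}w_{j+i})_j:\ k\ge 0\}$ is $\|\cdot\|_{\infty}$-dense in $c_0$ — which one should be able to extract from the fact that an $\ell_1$-functional annihilating all these vectors corresponds to an analytic function on the unit disc vanishing along a sequence tending to $0$. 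Everything else — well-definedness, the density transfers, and the invocations of Lemma~\ref{codimension} and Theorem~\ref{CXcompact} — is routine.
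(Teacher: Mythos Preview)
Your argument is essentially identical to the paper's: the $\tau_p$-supercyclicity in both parts is obtained from the Hilden--Wallen supercyclic vector in $c_0$ together with the $\tau_p$-density of $c_0$; the non-cyclicity in (i) is Lemma~\ref{codimension} applied with $G_0=c_0$ of codimension~$2$; and the non-weak-supercyclicity in (ii) is Theorem~\ref{CXcompact} applied to $c_{\infty}=C(\hat{\N})$.

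On the point you flag as delicate: you are right that the hypothesis ``$g\in c_{\infty}$'' does not by itself exclude $g\in c_0$, and that the cyclicity argument needs $g\notin c_0$. The paper's own proof does not address this either --- it simply writes ``$g\in c_{\infty}\setminus c_0$'' as though it were part of the hypothesis and then argues exactly as you do (namely $c_{\infty}=\mathrm{span}\{g\}\oplus c_0$ and $c_0=\overline{\mathrm{span}\{B_w^ng:n\ge 1\}}$). So the intended reading of the statement is evidently that $g\in c_{\infty}\setminus c_0$, and your proposed patch for the case $g\in c_0$ (via $\mathbf 1$ as an alternative cyclic vector) already goes beyond what the paper provides.
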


\begin{proof}
 By \cite{HildenWallen} it follows that $B_w:  c_{0}\rightarrow c_{0}$  is supercyclic. Thus, since $c_0$ is dense in $\ell_{\infty}$  with respect to the pointwise convergence topology, $B_w$ is  $\tau_p$-supercyclic on $\ell_{\infty},$ on $\text{span}\{f\}\oplus c_{\infty}$ and on $c_{\infty}$.\\
(i)	Since $c_0$ has codimension 1 in $c_{\infty},$ $c_0$ has codimension 2 in $\text{span}\{f\}\oplus c_{\infty}.$  Moreover, as $\lim_nw_n\rightarrow 0,$ $B_w(\text{span}\{f\}\oplus c_{\infty})\subseteq c_0$   and thus, $B_w: \text{span}\{f\}\oplus c_{\infty}\rightarrow \text{span}\{f\} \oplus c_{\infty}$ is not cyclic   by Lemma \ref{codimension}. \\
(ii) If $f\in c_0$ is a cyclic vector of $B_w:  c_0\rightarrow c_0$ and $g\in c_{\infty}\setminus c_0$ is such that $B_wg=f,$ then $c_{\infty}=\text{span}\{g\}\oplus c_0$ and $c_0=\overline{\text{span}\{B_w^ng: n\geq 1\}},$ and so, $B_w:c_{\infty}\rightarrow c_{\infty}$ is cyclic. On the other hand, since $B_w$ is a weighted composition operator and $c_{\infty}$ is the space $C(\hat{\N}),$ where  $\hat{\N}$ is the Alexandroff compactification of $\N,$ it cannot be weakly supercyclic by Theorem \ref{CXcompact}.
\end{proof}

Finally, the next example shows that cyclicity does not imply $\tau_p$-supercyclicity.

\begin{example}\label{multiplicationex}
	The multiplication operator $M_zf=zf$ is cyclic but it is not $\tau_p$-supercyclic on the disc algebra $A(\D)$. Indeed, span$\{\text{Orb}(M_z,1)\}=\text{span}\{1,z,z^2,\dots\}$ is dense in $A(\D),$ but the operator is not weakly supercyclic by \cite[Proposition 1.26]{BayartMatheron} because $\{\delta_z, \ z\in \D\}$  is a set of  independent eigenvectors of $M_z^*$ in $A(\D)^*$.
\end{example}

The next diagram  illustrates the relations between weak supercyclicity, $\tau_p$-supercyclicity and cyclicity for operators.

\begin{figure}[h]\label{graf}
	{\small
\begin{center}
	\begin{tikzcd}[column sep=4em,row sep=4em,inner xsep=0pt]
		\text{ supercyclic} {\arrow[r, Rightarrow,  shift left=0.7ex] \arrow[r, Leftarrow, shift left=-0.7ex, negated]} &
		\text{ weakly supercyclic} \arrow[d,Rightarrow,shift left=-0.8ex]\arrow[d,Leftarrow,shift left=0.8ex, negated] \arrow[r,Rightarrow,  shift left=0.8ex] \arrow[r,Leftarrow, shift left=-0.8ex, negated] &
		\text{ $\tau_p$-supercyclic} \arrow[dl,Rightarrow,start anchor=south west, shift left=0.8ex, negated] \arrow[dl,Leftarrow,start anchor=south west, shift left=-0.8ex, negated] \\
		& \text{ cyclic} % no width for the prime
	\end{tikzcd}
\end{center}} %\caption{Relations between weak supercyclicity, $\tau_p$-supercyclicity and cyclicity }
\end{figure}

We now analyze $\tau_p$-supercyclicity of $\W$ when $X$ is a perfect space.  We first consider $X=\overline{\D}$. Our first result follows as an inmediate consequence of Theorem \ref{noweakcases} (v):
%We prove that there are no $\tau_p$-supercyclic isometries in the space $C(\overline{\D})$.

%In what follows, we study  $\tau_p$-supercyclicity of the weighted composition operator  on  the space $C(\overline{\D})$ of continuous functions defined on the closed unit disc $\overline{\D}.$ 

\begin{corollary}\label{rotacionX}
	Let $\lambda\in\mathbb{C}$ with $|\lambda|\leq 1$ and let $\varphi(z)=\lambda z$, then $\W:C(\overline{\D})\rightarrow C(\overline{\D})$ is not $\tau_p$-supercyclic.
\end{corollary}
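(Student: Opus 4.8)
The plan is to invoke Theorem \ref{noweakcases} (v) directly, after checking that its hypotheses are met by $X = \overline{\D}$, $\varphi(z) = \lambda z$ with $|\lambda| \le 1$. First I would observe that all evaluations $\{\delta_z : z \in \overline{\D}\}$ are linearly independent in $C(\overline{\D})'$, which is standard (given finitely many distinct points in $\overline{\D}$, Urysohn-type interpolation produces continuous functions separating them), so the global linear independence requirement of part (v) holds. Next, $z_0 = 0$ is a fixed point of $\varphi(z) = \lambda z$, and $0$ is certainly an accumulation point of $\overline{\D}$, so that hypothesis is satisfied.

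The only remaining point is that $\varphi$ has stable orbits around $0$. For this I would exhibit the fundamental family of connected compact neighbourhoods $V_j := \{z \in \overline{\D} : |z| \le 1/j\}$, $j \in \N$. Each $V_j$ is a closed disc, hence connected and compact, and the family is clearly fundamental at $0$. Since $|\lambda| \le 1$, for $z \in V_j$ we have $|\varphi(z)| = |\lambda|\,|z| \le |z| \le 1/j$, so $\varphi(V_j) \subseteq V_j$ for every $j$. Thus $\varphi$ has stable orbits around $0$ in the sense of the definition preceding Theorem \ref{noweakcases}.

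With all hypotheses of Theorem \ref{noweakcases} (v) verified for the weighted composition operator $\W$ on $C(\overline{\D})$, the conclusion that $\W$ is not $\tau_p$-supercyclic follows immediately. There is essentially no obstacle here: the corollary is a direct specialization, and the content of the proof is just the verification that $\varphi(z) = \lambda z$ contracts (weakly) the concentric closed discs, together with the elementary fact about linear independence of point evaluations on $C(\overline{\D})$.

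\begin{proof}
All the evaluations $\{\delta_z:\ z\in\overline{\D}\}$ are linearly independent in $C(\overline{\D})'$, so it suffices to check that $\varphi(z)=\lambda z$ satisfies the remaining hypotheses of Theorem \ref{noweakcases} (v). Clearly $z_0=0$ is a fixed point of $\varphi$ and it is an accumulation point of $\overline{\D}$. Finally, setting $V_j:=\{z\in\overline{\D}:\ |z|\le 1/j\}$ for $j\in\N$, we obtain a fundamental family of connected compact neighbourhoods of $0$, and since $|\lambda|\le 1$ we have $|\varphi(z)|=|\lambda|\,|z|\le 1/j$ for every $z\in V_j$, that is, $\varphi(V_j)\subseteq V_j$ for all $j$. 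Hence $\varphi$ has stable orbits around $0$, and Theorem \ref{noweakcases} (v) yields that $\W:C(\overline{\D})\rightarrow C(\overline{\D})$ is not $\tau_p$-supercyclic.
\end{proof}
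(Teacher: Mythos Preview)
Your proof is correct and follows exactly the paper's approach: the corollary is stated there as an immediate consequence of Theorem \ref{noweakcases} (v), and your argument is simply the explicit verification of its hypotheses (linear independence of evaluations, $0$ a fixed and accumulation point, and stable orbits via the discs $V_j=\{|z|\le 1/j\}$).
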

We now consider a special class of weighted compositions operators and we prove that
there are no $\tau_p$-supercyclic isometries in the space $C(\overline{\D}).$
\begin{theorem}\label{isometryonD}
If  $T:C(\overline{\D})\rightarrow C(\overline{\D})$ is a surjective isometry, then it cannot be $\tau_p$-supercyclic.
\end{theorem}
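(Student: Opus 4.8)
The plan is to reduce the statement to Theorem \ref{noweakcases}(iv) by means of the Banach--Stone theorem. Since $\overline{\D}$ is a compact Hausdorff space, the (complex) Banach--Stone theorem describes every surjective linear isometry $T$ of $C(\overline{\D})$ as a weighted composition operator: there are a homeomorphism $\varphi:\overline{\D}\to\overline{\D}$ and a function $w\in C(\overline{\D})$ with $|w(z)|=1$ for all $z\in\overline{\D}$ such that $T=\W$. In particular $T$ is of the form studied in this paper, with $E=C(\overline{\D})\hookrightarrow(C(\overline{\D}),\tau_p)$ and all the evaluations $\{\delta_z:\ z\in\overline{\D}\}$ linearly independent in $E'$.

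Next I would exhibit a fixed point of $\varphi$. Because $\varphi$ is a continuous self-map of the compact convex set $\overline{\D}\subseteq\C$, Brouwer's fixed point theorem provides some $z_2\in\overline{\D}$ with $\varphi(z_2)=z_2$. Since $|w|\equiv 1$ on $\overline{\D}$, we trivially have $|w(z)|\le 1=|w(z_2)|$ for every $z\in\overline{\D}$. Choosing any $z_1\in\overline{\D}\setminus\{z_2\}$ (possible since $\overline{\D}$ has more than one point), the functionals $\{\delta_{z_1},\delta_{z_2}\}$ are linearly independent in $E'$. All the hypotheses of Theorem \ref{noweakcases}(iv) are therefore satisfied, whence $T=\W$ is not $\tau_p$-supercyclic.

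There is essentially no hard step here: the only point that deserves a comment is the identification of an abstract surjective isometry of $C(\overline{\D})$ with a concrete weighted composition operator whose weight is unimodular, which is exactly the content of Banach--Stone and is where compactness of $\overline{\D}$ enters. The remaining ingredients---existence of a fixed point via Brouwer, the trivial inequality $|w|\le|w(z_2)|$ coming from $|w|\equiv 1$, and linear independence of two distinct point evaluations on $C(\overline{\D})$---are immediate, so the bulk of the work was already done in Theorem \ref{noweakcases}.
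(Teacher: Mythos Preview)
Your proof is correct and follows essentially the same route as the paper: Banach--Stone to obtain $T=\W$ with $|w|\equiv 1$, then Brouwer's fixed point theorem on $\overline{\D}$, and finally the bound coming from the unimodularity of $w$. The only cosmetic difference is that you invoke Theorem \ref{noweakcases}(iv) directly, whereas the paper redoes the same estimate inline via Proposition \ref{necessary_wf}; since the proof of part (iv) is precisely that estimate, the two arguments are the same.
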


\begin{proof}
Let  $T:C(\overline{\D})\rightarrow C(\overline{\D})$ be  a surjective isometry. From the Banach-Stone Theorem \cite{stone},  there exist a homeomorphism $\varphi:\overline{\D}\rightarrow \overline{\D}$ and $w:\overline{\D}\rightarrow\C$ such that $|w(z)|=1$ for all $z\in \overline{\D}$ and $T=\W.$ Since $\overline{\D}$ is convex, by the Brouwer fixed-point theorem, $\varphi$ has a fixed point $z_0\in \overline{\D}.$ Suppose that $\W$ is $\tau_p$- supercyclic and let $f$ be a $\tau_p$-supercyclic function of $\W$. By Remark \ref{wfno0}, it follows that $f(z_0)\neq0.$ Let $z_1\in \overline{\D}$ with $z_1\neq z_0$ and $M=\max\{f(z):z\in \overline{\D}\},$ then
	$$\left | \frac{\prod_{m=0}^{n-1}w(\varphi^m(z_1))f({\varphi}^{n}(z_1))}{w(z_0)^nf(z_0)} \right |\leq \frac{M}{|f(z_0)|},\ \mbox{for all}\  n\in\N.$$	
	From Proposition \ref{necessary_wf}, it follows that $T$ cannot be $\tau_p$-supercyclic.
\end{proof}

%\section{Weighted composition operators on $C(\overline{\D})$ and $C(\mathbb{T})$}\label{sec4}

We now analyze  $\tau_p$-supercyclicity of weighted composition operators acting on  $C(\partial\D),$ the space of continuous functions on the unit circle. By Proposition \ref{necessaryCK}(ii) and the next Lemma it is enough to consider that the symbol $\varphi:\partial\D\rightarrow\partial\D$ is a homeomorphism.

\begin{lemma}\label{inyectividadtoro}
	If $\varphi:\partial\D\rightarrow\partial\D$ is  continuous and injective, then $\varphi$ is a homeomorphism.
\end{lemma}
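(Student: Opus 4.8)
The plan is to show that a continuous injective self-map $\varphi$ of the circle $\partial\D$ is automatically a homeomorphism, which amounts to checking that $\varphi$ is surjective and that its inverse is continuous. The second point is actually free once surjectivity is established: $\partial\D$ is compact and Hausdorff, so any continuous bijection between copies of $\partial\D$ is automatically a homeomorphism (a continuous map from a compact space to a Hausdorff space is closed). Hence the whole content of the lemma is the \emph{surjectivity} of $\varphi$, and that is where I would concentrate the argument.

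First I would observe that $K:=\varphi(\partial\D)$ is a compact connected subset of $\partial\D$, being the continuous image of the connected compact space $\partial\D$. The connected subsets of $\partial\D$ are the circle itself and the arcs (including degenerate ones and half-open/closed ones, but compactness forces $K$ to be either a closed arc or all of $\partial\D$). If $K$ were a proper closed arc, then $K$ is homeomorphic to a closed interval $[0,1]$, so we would have a continuous injection $\varphi:\partial\D\to [0,1]$. I would then derive a contradiction from this: a continuous injection of $\partial\D$ into $\R$ cannot exist. The cleanest way is to pick any point $p\in\partial\D$ and its antipode-style partner so that $\partial\D\setminus\{p\}$ is connected (an arc), and note that $\varphi$ restricted to $\partial\D\setminus\{p\}$ is a continuous injection of a connected space into $\R$, hence strictly monotone, and therefore an open map onto its image; letting $p$ vary (or using that $\partial\D$ minus two points is disconnected while an interval minus one interior point is disconnected into exactly two pieces) produces the contradiction. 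Concretely: remove two distinct points $p,q$ from $\partial\D$; the result has exactly two connected components. Their images under the injection $\varphi$ are two disjoint connected subsets of the interval $[0,1]\setminus\{\varphi(p),\varphi(q)\}$, which has at most three components, so that is fine — so instead I would argue via invariance-of-domain-type monotonicity: parametrize $\partial\D\setminus\{p\}$ as an open arc $\cong(0,1)$; $\varphi$ restricted there is a continuous injection $(0,1)\to\R$, hence strictly monotone, hence $\varphi((0,1))$ is an open interval $I$; then $\varphi(p)$ must be a point not in $I$ but a limit of points of $I$ from one side only, while by choosing a \emph{different} puncture point $p'$ in the interior of the original arc we get a second open-interval description that is incompatible unless the image is the full circle.

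Let me restructure to avoid that delicate bookkeeping. The slick route I would actually write is: suppose $\varphi$ is not surjective, so there is $w_0\in\partial\D\setminus\varphi(\partial\D)$. Then $\varphi$ maps $\partial\D$ continuously and injectively into $\partial\D\setminus\{w_0\}$, which is homeomorphic to the open interval $(0,1)$, hence to $\R$. So it suffices to show there is no continuous injection $\partial\D\to\R$. Given such a map $g$, it attains a maximum at some point $x_M$ and a minimum at some $x_m$; these are distinct since $g$ is injective and $\partial\D$ has more than one point. Removing $x_M$ and $x_m$ from $\partial\D$ leaves two disjoint open arcs $A_1,A_2$, and on each arc $g$ is a continuous injection on a connected set, hence strictly monotone, with values strictly between $\min g$ and $\max g$. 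By the intermediate value theorem, each value $c\in(\min g,\max g)$ is attained on $A_1$ and attained on $A_2$, giving two distinct preimages and contradicting injectivity. This is elementary and self-contained.

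The main obstacle is purely one of packaging: the statement ``continuous injective image of the circle in the line is impossible'' is intuitively obvious but needs a genuinely correct short proof, and it is easy to write a plausible-looking argument with a gap (e.g. forgetting that an arc might be half-open, or that a connected subset of the circle need not be an arc if it is not closed). I would therefore state explicitly the two facts used — (a) a continuous injection from a connected space into $\R$ is strictly monotone, and (b) a continuous bijection from a compact space to a Hausdorff space is a homeomorphism — cite or prove them in one line each, and then the lemma follows by: connectedness $\Rightarrow$ image is a circle or an arc; the max/min argument rules out the arc; surjectivity plus compact-to-Hausdorff gives the homeomorphism. I expect the whole proof to be about eight to ten lines.
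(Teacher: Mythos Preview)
Your argument is correct, but it takes a different route from the paper's. Both proofs reduce to showing surjectivity (and both implicitly or explicitly use that a continuous bijection from a compact space to a Hausdorff space is a homeomorphism). From there the paper argues as follows: if $\varphi$ is not surjective, the image $\varphi(\partial\D)$ is a proper compact connected subset of $\partial\D$, hence a closed arc; pick an \emph{interior} point $t=\varphi(z)$ of that arc. Then $\partial\D\setminus\{z\}$ is connected, but its image $\varphi(\partial\D)\setminus\{t\}$ is disconnected (an arc minus an interior point), contradicting the fact that continuous images of connected sets are connected. This single-puncture connectedness trick is shorter than your max/min plus IVT argument and avoids any appeal to monotonicity. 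Your approach, on the other hand, packages things as ``there is no continuous injection $\partial\D\to\R$,'' which is a clean standalone fact and perhaps more memorable; the two-arc IVT argument you give for it is sound (the key point being that each open arc $A_i$ has both $x_M$ and $x_m$ in its closure, so $g(\overline{A_i})=[\min g,\max g]$). Either way the lemma follows; the paper's version is just a couple of lines shorter.
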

\begin{proof}
	Suppose there exists $\varphi:\partial\D\rightarrow\partial\D$   continuous and injective  such that $\varphi(\partial\D)\neq \partial\D.$ Since $\varphi$ is continuous, $\varphi(\partial\D)$ is compact and connected. Let $t$ be an interior point of $ \varphi(\partial\D)$ and consider $z\in \partial\D$ such that  $t=\varphi(z).$ Since $\varphi$ is injective and no surjective, $\varphi(\partial\D\backslash\{z\})=\varphi(\partial\D)\setminus\{t\}$ is not connected, but $\partial\D\backslash\{z\}$ is so, a contradiction.
\end{proof}

In the next propositions we show that $\W$ is not $\tau_p$-supercyclic on $C(\partial\D)$ when $\varphi$ has periodic points or it is a rotation and $|w|=1.$

%\begin{proposition}\label{noweakcomptoro}
%	Let $\varphi:\mathbb{T}\rightarrow\mathbb{T}$ be a homeomorphism, then the composition operator  $C_{\varphi}:C(\mathbb{T})\rightarrow C(\mathbb{T})$ is not $\tau_p$-supercyclic.
%\end{proposition}
%\begin{proof}
%	Let $\mu$ be a probability measure such that $\mu(\varphi^{-1}(A))=\mu(A)$ for all measurable sets $A\subset\mathbb{T}$ (see \cite[Exercise 1.2]{wellington}). Let $x^{*}:C(\mathbb{T})\rightarrow \C$ be the functional defined as
%	$x^{*}(f):=\int_{\mathbb{T}}f d\mu$  We have
%	$${C_{\varphi}}^{*}(x^{*})(f)=\int_{\mathbb{T}}f\circ\varphi\  d\mu=\int_{\mathbb{T}}f\ d(\mu\circ{\varphi}^{-1})=\int_{\mathbb{T}}fd\mu=x^{*}(f),$$ and then $1$
%is an eigenvalue of ${C_{\varphi}}^{*}.$ By  \cite[Proposition 1.2.6]{BayartMatheron}, there exists a hyperplane $X_0\subseteq C(\mathbb{T})$ such that $C_{\varphi}:X_0\rightarrow X_0$ is hypercyclic. However, {\color{red} $\overline{{C_{{\varphi}^n}}(B_{X_0})}\subseteq \overline{B_{X_0}}$} for all $n\in\N,$ a contradiction.
%\end{proof}

\begin{proposition}\label{discoperiodicos}
	Let $\varphi:\partial\D\rightarrow\partial\D$ be a homeomorphism with a periodic point. The weighted composition operator  $\W:C(\partial\D)\rightarrow C(\partial\D)$ is not $\tau_p$-supercyclic.
\end{proposition}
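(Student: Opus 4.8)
The strategy is to reduce to a case already handled by Theorem~\ref{noweakcases} by distinguishing according to the type of periodic point of $\varphi$. First I would observe that a homeomorphism $\varphi:\partial\D\rightarrow\partial\D$ preserving orientation which has a periodic point of period $p\ge 1$ must in fact have every point periodic of period dividing some fixed $q$, or else has a fixed point; indeed, if $\varphi$ reverses orientation it has exactly two fixed points, and if it preserves orientation and has a periodic orbit of period $p$, then its rotation number is rational, say $k/p$ in lowest terms, and $\varphi^p$ has fixed points. So after replacing $\varphi$ by a suitable iterate $\varphi^q$ (noting that $\W^q=C_{w_q,\varphi^q}$ for the appropriate weight $w_q=\prod_{m=0}^{q-1}w\circ\varphi^m$, and that $\tau_p$-supercyclicity of $\W$ implies $\tau_p$-supercyclicity of every power $\W^q$, since an operator is $\tau_p$-supercyclic iff its powers are by the Ansari-type argument), I may assume $\varphi$ has a fixed point $z_0\in\partial\D$.

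Having produced a fixed point $z_0$, I would split into two subcases. If $\varphi$ is not the identity, then $\varphi$ has at least one point $z_1\neq z_0$ with $z_1\neq\varphi(z_1)$, hence $\{\delta_{z_1},\delta_{\varphi(z_1)}\}$ are linearly independent in $C(\partial\D)'$ (all point evaluations on $\partial\D$ are linearly independent in $C(\partial\D)'$), and moreover—this is the crucial point—the orbit $(\varphi^n(z_1))_n$ need not converge, so Theorem~\ref{noweakcases}(ii) does not directly apply; instead I invoke Theorem~\ref{noweakcases}(iii), which is tailored to the compact case with a periodic non-fixed point, applied to the original $\varphi$ (before passing to the iterate). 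In fact it is cleanest to apply part (iii) directly: from the hypothesis $\varphi$ has a periodic non-fixed point $z_1$ (if every periodic point were fixed I would instead be in case (i) or handle the identity separately), and since $\varphi$ is a homeomorphism of $\partial\D$ with all evaluations linearly independent, part (iii) gives that $\W$ is not $\tau_p$-supercyclic. The boundedness estimate there comes from the fact that along a periodic orbit the quantities $\prod_{m=0}^{n-1}w(\varphi^m(z_1))f(\varphi^n(z_1))\big/\prod_{m=0}^{n-1}w(\varphi^m(z_2))f(\varphi^n(z_2))$ telescope to $w(\varphi^n(z_1))f(\varphi^{n+1}(z_1))\big/\big(w(z_1)f(\varphi^n(z_1))\big)$, which stays in a fixed annulus since $f,w$ are continuous and zero-free on the finite (compact) forward orbit by Remark~\ref{wfno0}, contradicting Proposition~\ref{necessary_wf}.

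If on the other hand $\varphi$ is the identity on $\partial\D$, then $\W=M_w$ is a multiplication operator, and every point evaluation $\delta_z$, $z\in\partial\D$, is an eigenvector of $M_w'=M_w^*$ with eigenvalue $w(z)$; since these form an infinite linearly independent set of eigenvectors of the adjoint, $M_w$ is not weakly supercyclic by \cite[Proposition~1.26]{BayartMatheron}, and hence not $\tau_p$-supercyclic either—or, more elementarily, one applies Theorem~\ref{noweakcases}(i) with any two distinct fixed points $z_1\neq z_2$ of the identity map. Combining the two subcases covers every homeomorphism of $\partial\D$ with a periodic point, so $\W$ is never $\tau_p$-supercyclic in this setting. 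The main obstacle in writing this up carefully is the opening orientation/rotation-number discussion: one must be sure that ``has a periodic point'' genuinely forces either a fixed point of some iterate or allows direct application of part (iii), and that passing between $\W$ and its iterates $\W^q$ preserves (non-)$\tau_p$-supercyclicity; everything after that is a routine citation of the already-established Theorem~\ref{noweakcases}.
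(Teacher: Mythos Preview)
Your case split has a genuine gap. When $\varphi$ is an orientation-preserving homeomorphism of $\partial\D$ with \emph{exactly one} fixed point $z_0$ and no other periodic points (rotation number $0$, ``parabolic'' type), neither Theorem~\ref{noweakcases}(i) (which needs two fixed points) nor (iii) (which needs a periodic non-fixed point) applies, and this is certainly not the identity. Your parenthetical ``if every periodic point were fixed I would instead be in case (i) or handle the identity separately'' overlooks precisely this possibility, and passing to an iterate does not help: if $\varphi$ has a unique fixed point and no other periodic points, the same is true of every $\varphi^q$.

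The paper closes this gap with a short interval-dynamics argument you dismissed too quickly. Cutting $\partial\D$ at the unique fixed point $z_0$ gives a homeomorphism $f$ of $[0,1]$ fixing only the endpoints; since $f(x)-x$ has constant sign on $(0,1)$, every orbit $(f^n(x))_n$ is monotone and hence converges to an endpoint. Back on the circle this means $(\varphi^n(z))_n\to z_0$ for every $z$, which is exactly the hypothesis of Theorem~\ref{noweakcases}(ii). So contrary to your claim that ``the orbit need not converge,'' in the single-fixed-point case it always does, and (ii) finishes the job. The remaining cases (at least two fixed points, or a periodic orbit of period $\ge 2$) are covered by (i) and (iii) respectively, exactly as both you and the paper observe; your rotation-number and iterate preliminaries are correct but ultimately unnecessary once the three cases of Theorem~\ref{noweakcases} are invoked directly.
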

\begin{proof}
	The dynamics in the case with just a fixed point $z_0\in \partial\D$ reduces to the dynamics of functions $f$ on the interval $[0,1]$  that are invertible, with $f(0)=0,$ $f(1)=1$ and $f(x)\neq x$ for every $x\in (0,1)$ \cite[Section 2]{gadgil}.  Since $f(x)-x$ has constant sign, we can suppose w.l.o.g. that $f(x)-x>0,$ and then it follows that $x<\dots<f^{n-1}(x)<f^n(x)<1$ for all $x\in(0,1).$ As a consequence, we get that the iterates must converge to a fixed point, that is, $\lim_{n\rightarrow \infty}f^n(x)=1,$ and then $\varphi^n(z)$ converges to $z_0$ for every $z\in\partial\D\setminus\{z_0\}.$ By Theorem \ref{noweakcases} (ii),  $\W$ is not $\tau_p$-supercyclic. In the case $\varphi$ has more than one fixed point, or has a periodic point, Theorem \ref{noweakcases} (i) and (iii) yield the result.
\end{proof}

\begin{proposition}	\label{notauprotacio}
Let $\varphi: \partial\D\rightarrow \partial\D,$  $\varphi(z)=\lambda z$ for some $\lambda \in \C$ with $|\lambda|=1$ and $w$ such that $|w|=1.$ The operator $\W:C(\partial\D)\rightarrow C(\partial\D)$ is never $\tau_p$-supercyclic.
\end{proposition}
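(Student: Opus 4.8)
The plan is to argue by contradiction: suppose $\W$ admits a $\tau_p$-supercyclic vector $f$ (so $f\neq 0$), and exploit that the hypothesis $|w|\equiv 1$ on $\partial\D$ makes the products $W_n:=\prod_{m=0}^{n-1}w\circ\varphi^m$ unimodular, i.e.\ $|W_n(z)|=1$ for every $n\in\N$ and $z\in\partial\D$ (recall $\W^nf=W_n\cdot(f\circ\varphi^n)$). I would not distinguish between rational and irrational $\lambda$; the argument below works uniformly and in particular also reproves the case in which $\varphi$ has periodic points, already covered by Proposition~\ref{discoperiodicos}.

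First I would show that $f$ has no zeros on $\partial\D$. Assume $Z_f:=f^{-1}(0)\neq\emptyset$; note $Z_f\neq\partial\D$ since $f\not\equiv0$. As the constant function $1$ lies in the $\tau_p$-closure of $\{\lambda\W^nf:\lambda\in\C,\ n\ge0\}$, there is a net $(\lambda_\alpha,n_\alpha)$ with $\lambda_\alpha\W^{n_\alpha}f\to1$ pointwise on $\partial\D$. By compactness of $\partial\D$, after passing to a subnet I may assume $\lambda^{n_\alpha}\to\mu$ for some $\mu\in\partial\D$. Then for every $z$, $|\W^{n_\alpha}f(z)|=|f(\lambda^{n_\alpha}z)|\to|f(\mu z)|$ by continuity of $f$. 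Choosing a point $z_0$ with $f(\mu z_0)\neq0$ (possible since $Z_f\neq\partial\D$) forces $|\lambda_\alpha|\to1/|f(\mu z_0)|<\infty$; but then, choosing a point $z_1$ with $f(\mu z_1)=0$ (possible since $Z_f\neq\emptyset$), we get $|\lambda_\alpha\W^{n_\alpha}f(z_1)|=|\lambda_\alpha|\,|f(\lambda^{n_\alpha}z_1)|\to0$, contradicting $\lambda_\alpha\W^{n_\alpha}f(z_1)\to1$. Hence $f$ is nowhere vanishing.

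Then I would finish using Proposition~\ref{necessary_wf}. By the previous step and compactness there are constants $0<m\le M<\infty$ with $m\le|f(z)|\le M$ for all $z\in\partial\D$. For any $z_1\neq z_2$ in $\partial\D$, since $|W_n(z_1)|=|W_n(z_2)|=1$,
$$\left|\frac{W_n(z_1)\,f(\varphi^n(z_1))}{W_n(z_2)\,f(\varphi^n(z_2))}\right|=\frac{|f(\lambda^n z_1)|}{|f(\lambda^n z_2)|}\le\frac{M}{m}\qquad\text{for all }n\in\N,$$
so this set of quotients is bounded and cannot be dense in $\C$, contradicting Proposition~\ref{necessary_wf}. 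Therefore $\W$ is not $\tau_p$-supercyclic.

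The main obstacle is the first step: one must rule out $f$ having \emph{any} zero, not merely too many zeros, and the crucial manoeuvre is passing to a subnet along which $\lambda^{n_\alpha}\to\mu$, which pins the moving zero sets $\varphi^{-n}(Z_f)$ of the iterates $\W^nf$ to a fixed position relative to $\mu$. A naive sequential argument using a fixed countable dense subset of $\partial\D$ does not suffice, because the rotated zero set $\mu^{-1}Z_f$ need not meet it. Once $f$ is known to be nowhere vanishing, the second step is routine given the uniform bounds on $|f|$.
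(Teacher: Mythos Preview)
Your proof is correct, and it shares with the paper the key technical manoeuvre: pass to a subnet along which $\lambda^{n_\alpha}\to\mu\in\partial\D$, so that $|\W^{n_\alpha}f(z)|=|f(\lambda^{n_\alpha}z)|\to|f(\mu z)|$ by continuity, and then read off information about $|f|$ from the limit function. Where you diverge is in how you exploit this. You apply the trick only to the target $g\equiv1$ in order to conclude that $f$ is zero-free, and then invoke Proposition~\ref{necessary_wf} to obtain a bounded (hence non-dense) family of quotients. The paper instead applies the same subnet argument to an \emph{arbitrary} $g$ with $\|g\|_\infty=1$, deduces that $|g(\cdot)|=|f(\lambda_0\,\cdot)|$ for some $\lambda_0\in\partial\D$, and hence that every such $g$ has the same $L^1(\partial\D)$ norm as $f$, an immediate absurdity. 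Your route has the virtue of staying within the paper's general machinery (Proposition~\ref{necessary_wf}) and of making explicit the zero-free property of supercyclic vectors in this setting; the paper's route is self-contained and yields a slightly stronger rigidity statement ($|g|$ must be a rotate of $|f|$) before reaching the contradiction.
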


\begin{proof}
Assume that $\W$ is $\tau_p$-supercyclic. Let $f$ be a  $\tau_p$-supercyclic function such that $\|f\|_{\infty}=1.$ Given $g\in C(\partial\D)$ such that $\|g\|_{\infty}=1,$ there exists a net  such that $\left(\alpha_iC_{\varphi,w}^{n(i)}f\right)_{i\in I}\rightarrow g$ in $\tau_p,$ and thus, $\left(|\alpha_i||f(\varphi^{n(i)})| \right)_{i\in I}\rightarrow |g|$ in $\tau_p.$ Since $\partial\D$ is compact, by extracting a subnet, we can assume w.l.o.g. that $\lambda^{n(i)}\rightarrow \lambda_0$ for some $\lambda_0\in \partial\D.$ Let $z_0\in \partial\D$ such that $|f(\lambda_0 z_0)|=1.$ Since $|f(\varphi^{n(i)}(z_0))|\rightarrow |f(\lambda_0 z_0)|=1$ and $|f(\varphi^{n(i)}(z_0))|\leq 1,$ there exists $i_0$ such that $|f(\varphi^{n(i)}(z_0))|\in [1/2, 1]$ and $$ |\alpha_i||f(\varphi^{n(i)}(z_0))|-|g(z_0)|<1,\  \mbox{i.e.},\  |\alpha_i||f(\varphi^{n(i)}(z_0))|< 2$$ for every $i\geq i_0.$ Hence, $|\alpha_i|\leq \frac{2}{|f(\varphi^{n(i)}(z_0))|}\leq 4.$   Let  $\left(|\alpha_j||f(\varphi^{n(j)})| \right)_{j\in J},$ $J\subseteq I,$ be a subnet  such that $|\alpha_j|_{j\in J}$ converges to $\alpha\geq 0.$ Now, $\left(|\alpha_j||f(\varphi^{n(j)})| \right)_{j\in J}\rightarrow \alpha|f(\lambda_0 \ \cdot)|$ in $\tau_p$ and also  $\left(|\alpha_j||f(\varphi^{n(j)})| \right)_{j\in J}\rightarrow |g|.$ Hence, $\alpha=1$ and $|g(z)|=|f(\lambda_0 z)|$ for every $z\in \partial\D.$ Therefore, it follows that $\|g\|_{L^1}=\|f\|_{L^1}$ for every $g\in B_{C(\partial\D)} ,$ a contradiction.
\end{proof}

Given  an interval $I$ and $\varphi: I\rightarrow I,$ the $\omega$-limit set of the orbit of $x\in I$ is the set
$$\omega(x) = \{y\in I :\  \exists  n_k \text{ with } \lim_{k\rightarrow \infty}\varphi^{n_k}(x)=y\}.$$
We say that $\varphi: I\rightarrow I$  has a \emph{wandering interval} $J$ if the intervals $J, \varphi(J), \varphi^2(J),\dots$ are pairwise disjoint and  the $\omega$-limit set of $J$ is not equal to a single periodic orbit. 	 Examples of symbols $\varphi:\partial\D\rightarrow \partial\D$ with non-wandering intervals are the $C^2$ diffeomorphisms without periodic points, the $C^1$ diffeomorphisms whose derivative is a function of bounded variation or the analytic homeomorphisms of the circle without periodic points \cite[Chapter 1, Section 2]{wellington}.

\begin{theorem}\label{Tnotaup}
If $\varphi: \partial\D\rightarrow \partial\D$ does not have  a wandering interval $J$ and $|w|=1,$ then the weighted composition operator $\W: C(\partial\D)\rightarrow C(\partial\D)$ is not $\tau_p$-supercyclic.
\end{theorem}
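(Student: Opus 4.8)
The plan is to reduce the statement to the rotation case already settled in Proposition \ref{notauprotacio} by conjugating $\W$ with a suitable change of variables. First I would collect the reductions available for free: if $\W$ is $\tau_p$-supercyclic then, by Proposition \ref{necessaryCK}(ii) and Lemma \ref{inyectividadtoro}, $\varphi$ is a homeomorphism of $\partial\D$, and by Proposition \ref{discoperiodicos} we may assume $\varphi$ has no periodic point; in particular $\varphi$ has no fixed point, so $\varphi$ is orientation preserving (an orientation-reversing circle homeomorphism lifts to a map $\tilde\varphi$ with $\tilde\varphi(x+1)=\tilde\varphi(x)-1$, whence $\tilde\varphi(x)-x$ decreases by $2$ over a period and, by the intermediate value theorem, hits an integer, forcing a fixed point). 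Thus $\varphi$ is an orientation-preserving circle homeomorphism whose rotation number $\alpha$ is irrational, since rationality of the rotation number is equivalent to the existence of a periodic orbit.

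The main step is to bring in the Poincar\'e classification of circle homeomorphisms: an orientation-preserving homeomorphism of $\partial\D$ with irrational rotation number is either minimal or admits wandering intervals, and since by hypothesis $\varphi$ has no wandering interval it is minimal, hence topologically conjugate to the rigid rotation $R_\alpha(z)=e^{2\pi i\alpha}z$; that is, $\varphi=h^{-1}\circ R_\alpha\circ h$ for some homeomorphism $h:\partial\D\to\partial\D$ (see, e.g., \cite{wellington}). Now I would transport $\W$ along $h$: set $V:C(\partial\D)\to C(\partial\D)$, $Vf=f\circ h^{-1}$. Then $V$ is a linear bijection which is simultaneously a homeomorphism for $\tau_p$ (precomposition with a fixed continuous map is $\tau_p$-continuous, and so are $V$ and $V^{-1}$). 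A short computation using $h\circ\varphi=R_\alpha\circ h$ gives $V\W V^{-1}=C_{\tilde w,R_\alpha}$ with $\tilde w=w\circ h^{-1}$, which still satisfies $|\tilde w|\equiv 1$. Because $\tau_p$-supercyclicity is preserved under conjugation by the $\tau_p$-homeomorphism $V$, the operator $\W$ is $\tau_p$-supercyclic if and only if $C_{\tilde w,R_\alpha}$ is; but Proposition \ref{notauprotacio} asserts that no weighted composition operator with unimodular weight and rotation symbol is $\tau_p$-supercyclic. This contradiction proves the theorem.

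The step I expect to require the most care is the appeal to the Poincar\'e--Denjoy classification: one must phrase the trichotomy ``periodic orbit / minimal / wandering intervals'' precisely and check that the hypotheses here eliminate the first and third alternatives. It is convenient to quote the classification exactly as in \cite{wellington}, the reference already used above for examples of maps without wandering intervals, so that ``no wandering interval'' is understood in the same sense as in the definition preceding the statement. The remaining ingredients, namely the operator identity $V\W V^{-1}=C_{\tilde w,R_\alpha}$, the preservation of $|\tilde w|\equiv 1$, and the transfer of $\tau_p$-supercyclicity along $V$, are routine and I would only sketch them.
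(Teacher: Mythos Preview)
Your proposal is correct and follows essentially the same route as the paper: reduce to the case of a periodic-point-free homeomorphism via Proposition~\ref{discoperiodicos}, invoke the Poincar\'e--Denjoy classification from \cite{wellington} to conjugate $\varphi$ to a rotation, transport $\W$ along the conjugating homeomorphism to obtain a weighted composition operator with rotation symbol and unimodular weight, and conclude by Proposition~\ref{notauprotacio}. The only differences are cosmetic: you spell out why $\varphi$ must be orientation preserving with irrational rotation number and compute the conjugation $V\W V^{-1}=C_{\tilde w,R_\alpha}$ by hand, whereas the paper simply cites \cite{wellington}, \cite{Gunatillake} and \cite{liangzhou} for these steps.
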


\begin{proof}
Assume $\varphi: \partial\D\rightarrow \partial\D$ does not have  a wandering interval $J$ and has no periodic points (otherwise, apply Proposition \ref{discoperiodicos}). By \cite[page 36]{wellington},  $\varphi$ is conjugate to a rotation $\tilde{\varphi},$ i.e., there exists a homeomorphism $h:\partial\D\rightarrow \partial\D$ such that $\tilde{\varphi}=h^{-1}\circ\varphi\circ h,$ and thus, $\W$ is similar to $C_{w\circ h, \tilde{\varphi}} $ \cite{Gunatillake}. Since $C_{w \circ h, \tilde{\varphi}} $ is not $\tau_p$-supercyclic by Proposition \ref{notauprotacio}, then $\W$ is not $\tau_p$-supercyclic \cite{liangzhou}.
\end{proof}

%It remains open the question about if every surjective isometry on $ C(\mathbb{T})$ is not $\tau_p$-supercyclic.
From all our results it seems natural to conjecture  that if  $X$ is compact,  $E\hookrightarrow (C(X),\tau_p)$ is a Banach space and the operator $\W: E \rightarrow E$  is isometric, then $\W$  is not $\tau_p$-supercyclic.

%\textbf{2010 Mathematics Subject Classification}{Primary: 45Q05; 49K40. Secondary: 35K65; 35L80.}

%\keywords{Well-posedness; Inverse Problems; Degenerate evolution equations; Operator-valued multipliers; Delay}

\subsection*{Acknowledgments}
The first and the second author were supported by MEC, MTM2016-76647-P. The third author was supported by MEC, MTM2016-75963-P and GVA/2018/110.

\end{document}